\documentclass{amsart}
\usepackage{geometry}
\usepackage{lmodern}
\usepackage{thmtools}
\usepackage{amssymb,amsmath,amsthm}
\usepackage{hyperref}
\usepackage[capitalize]{cleveref}
\usepackage{setspace}
\usepackage{enumitem}
\usepackage[
            backend=biber,
            style=alphabetic,
            url=false,
            maxnames=10,
            minnames=5
]{biblatex}

\title{Structural complexity vs.\ computational complexity}
\author{Johanna N.Y.\ Franklin}
\author{Dino Rossegger}
\author{Dan Turetsky}
\newcommand{\G}{\mathcal{G}}

\renewcommand{\phi}{\varphi}

\mathchardef\mhyph="2D 
\newcommand{\A}{\mathcal{A}}
\newcommand{\C}{\mathcal{C}}
\newcommand{\B}{\mathcal{B}}

\newcommand{\ol}[1]{\overline{#1}}

\newcommand{\mc}[1]{\mathcal{#1}}
\newcommand{\ra}{\rightarrow}
\newcommand{\Ra}{\Rightarrow}

\newcommand{\Pinf}[1]{\Pi^{\mathrm{in}}_{#1}}
\newcommand{\Picom}[1]{\Pi^{\mathrm{c}}_{#1}}
\newcommand{\Sinf}[1]{\Sigma^{\mathrm{in}}_{#1}}
\newcommand{\Sicom}[1]{\Sigma^{\mathrm{c}}_{#1}}
\newcommand{\dSinf}[1]{d\mhyph\Sigma^{\mathrm{in}}_{#1}}

\newcommand{\ba}{\bar a}

\newcommand{\ock}{\omega_{1}^{\mathrm{ck}}}

\newcommand{\nat}{\omega}

\DeclareMathOperator{\restrict}{\upharpoonright}
\DeclareMathOperator{\concat}{^\smallfrown}

\newcommand{\define}[1]{\textbf{#1}}

\declaretheorem[name={Theorem}]{theorem}

\declaretheorem[name={Lemma}, sibling=theorem]{lemma}

\declaretheorem[name={Claim}, numberwithin=theorem]{claim}
\declaretheorem[name={Definition}, style=definition]{definition}
\declaretheorem[name={Remark},numbered=no, style=definition]{remark}

\declaretheorem[name={Question}]{question}

\Crefname{question}{Question}{Questions}

\address{Department of Mathematics, Hofstra University} \email{johanna.n.franklin@hofstra.edu}
\address{Institute of Discrete Mathematics and Geometry, Technische Universit\"at Wien} \email{dino.rossegger@tuwien.ac.at}
\address{School of Mathematics and Statistics, Victoria University of Wellington} \email{dan.turetsky@vuw.ac.nz}
\subjclass{03C57,03D45} 
\thanks{The authors would like to thank the Erwin Schrödinger International Institute for Mathematics and Physics (ESI) in Vienna for its hospitality and support during the thematic program `Reverse Mathematics'. The first author was supported in part by Simons Foundation Collaboration Grant \#420806, which also provided travel support to the second author. The work of the second author was supported by the Austrian Science Fund (FWF) 10.55776/PIN1878224 and 10.55776/PAT4699225. The third author was supported by the Royal Society of New Zealand through the Marsden grant 23-VUW-118.} 
\begin{document} 

\begin{abstract}
We consider highness in the context of computable structure theory and, particularly, the Scott rank of a structure. We define highness for Scott rank and highness for computably defined Scott rank $\alpha$ and characterize them in terms of the ability to compute $\Delta^0_\beta$ sets for appropriate $\beta$. We close with a discussion of the index sets of structures with a given Scott rank or computably defined Scott rank and a few words about highness for noncomputable Scott ranks.
\end{abstract}

\maketitle 

Highness allows us to describe an oracle as maximally powerful in some context. The notion was first introduced by Soare in \cite{s72} in the context of degree theory; here, the context was that of the Turing jump and the $\Delta^0_2$ degrees. Since then, highness has been considered in the context of randomness \cite{fsy-bases} and computable structure theory \cite{cft23,cft}. In this paper, we study highness not in the general context of computable structure theory but in that of structures of particular Scott ranks.

We refer the reader to the text by Ash and Knight~\cite{ash2000} for background on both effective and noneffective formula complexity.

We will use the following definition of Scott rank due to Montalbán~\cite{montalban2015a}.
\begin{definition}
  A structure $\A$ has \define{Scott rank $\alpha$} for $\alpha<\omega_1$ if $\alpha$ is least so that the automorphism orbit of every tuple in $\A$ is $\Sinf{\alpha}$-definable.
  It has \define{computably defined Scott rank $\alpha$} if $\alpha$ is least so that every automorphism orbit is $\Sicom\beta$-definable for $\beta\leq \alpha$.
\end{definition}
A computable structure $\A$ has \define{computable Scott rank} if it has Scott rank $\alpha<\ock$. 
\begin{remark}
Suppose $\A$ has Scott rank $\lambda$, where $\lambda$ is a limit ordinal. If $\bar a \in \A$, then by definition $\bar a$'s orbit is definable by a $\Sinf{\lambda}$ formula.  But a $\Sinf{\lambda}$ formula is a disjunction of formulas which are each $\Sinf{\beta}$ for some $\beta < \lambda$.  So by passing to the disjunct satisfied by $\bar a$, we may assume that the formula is $\Sinf{\beta}$ for some $\beta < \lambda$.
The same reasoning applies for computably defined Scott rank $\lambda$.
\end{remark}
Note that by this remark there are structures of computably defined Scott rank $\ock$ even though there are no $\Sicom\ock$-formulas. In fact, we will see (\cref{thm:limitiffcompdefined}) that every computable structure with Scott rank $\ock$ has computably defined Scott rank $\ock$.

Now we can define highness in this context. Calvert, Franklin, and Turetsky defined a set to be \define{high for isomorphism} if it can compute an isomorphism between any two isomorphic computably presented structures \cite{cft23}. In this case, we define highness for structures of a particular Scott rank.

\begin{definition} Let $\mathbf d$ be a Turing degree and $\alpha<\omega_1$.
  \begin{enumerate}
    \item $\mathbf d$ is \define{high for Scott rank $\alpha$} if for every computable structure $\A$ of Scott rank $\alpha$ and for all computable $\B\cong \A$, $\mathbf d$ computes an isomorphism $\B\to \A$.
    \item $\*d$ is \define{high for computably defined Scott rank $\alpha$} if for every structure $\A$ of computably defined Scott rank $\alpha$ and for all computable $\B\cong \A$, $\*d$ computes an isomorphism $\B\to\A$.
  \end{enumerate}
\end{definition}

Suppose $\A$ and $\B$ are computable structures of Scott rank at most $\alpha$, and $\C$ is any computable structure of Scott rank $\alpha$.  Then if $\A\oplus\C$ is the disjoint union, correctly defined, it has Scott rank exactly $\alpha$.  Further, any isomorphism $\B\oplus\C \to \A\oplus C$ gives an isomorphism $\B\to \A$.  Thus we could have equivalently defined highness for Scott rank $\alpha$ in terms of structures of Scott rank at most $\alpha$, rather than in terms of structures of Scott rank exactly $\alpha$.

It follows that for $\alpha \le \beta$, highness for Scott rank $\beta$ implies highness for Scott rank $\alpha$. The same holds for highness for computably defined Scott rank.

The Scott rank of a structure is a structural measure of complexity, giving upper bounds on the quantifier complexity of definable relations on the structure. Meanwhile, the least Turing degree computing isomorphisms between any two computable copies of a structure---commonly referred to as its \define{degree of categoricity}~\cite{fokina2010}---gives a computational bound: If two computable structures $\+A$ and $\+B$ are computably isomorphic and $R\subseteq\nat$ is a definable relation, then the Turing degrees of $R^\+A$ and $R^\+B$ are the same, and we can say that $\+A$ and $\+B$ possess the same computational properties. However, it is well known that this does not hold if $\+A$ and $\+B$ are not computably isomorphic. On the other hand, if the degree of categoricity of $\+A$ is $\*d$, then modulo $\*d$, $\+A$ and $\+B$ are computationally indistinguishable and hence degrees of categoricity provide a computational complexity measure.
Under these considerations, the immediate corollary of our main result that any $\Delta^0_{2\alpha+1}$-complete set is high for Scott rank $\alpha$ shows that structural simplicity limits computational complexity: No structure of Scott rank $\alpha$ can have degree of categoricity greater than $\mathbf 0^{(2\alpha+1)}$ (or, for finite $\alpha$, $\mathbf 0^{(2\alpha)}$).

The paper is organized as follows. In Section \ref{sec:cSr}, we discuss highness for computable Scott ranks and prove that for $0 < \alpha < \ock$, $\*d$ is high for Scott rank $\alpha$ if and only if $\*d$ computes every $\Delta^0_{2\alpha+1}$ set (Theorem \ref{thm:highforsr}).  In Section \ref{sec:cdSr}, we characterize highness for computably defined Scott rank $\alpha$ as computing every $\Delta^0_{\alpha+1}$ set for limit $\alpha$ and every $\Delta^0_{\alpha+2}$ set for successor $\alpha$ (Theorem \ref{thm:highforcompSR}). In Section \ref{sec:index}, we turn our attention to index sets and prove that the index set of structures with Scott rank $\alpha$ and the index set of structures with computably defined Scott rank $\alpha$ are both $\Sigma^0_{2\alpha+2}$-complete for all $\alpha< \ock$ (Theorem \ref{thm:indexsets}). Finally, in Section \ref{sec:ncSr}, we discuss highness for Scott ranks $\ock$ and $\ock+1$.

\section{Highness for computable Scott ranks}\label{sec:cSr}
The main goal of this section is to prove the following theorem.
\begin{theorem}\label{thm:highforsr}
For $0 < \alpha < \ock$, $\*d$ is high for Scott rank $\alpha$ if and only if $\*d$ computes every $\Delta^0_{2\alpha+1}$ set.\footnote{We are stating the theorem in this fashion to unify the finite and infinite cases.  If we were to write the criteria in terms of computing jumps, we would need to write $0^{(2n)}$ for $n < \omega$ and $0^{(2\alpha+1)}$ for $\alpha \ge \omega$.}
\end{theorem}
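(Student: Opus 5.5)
We prove the two directions separately: the upper bound from the definability of orbits, the lower bound by building pairs of structures that encode arbitrary $\Delta^0_{2\alpha+1}$ sets.

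\textbf{Upper bound.} Suppose $\*d$ computes every $\Delta^0_{2\alpha+1}$ set. Let $\A$ be computable of Scott rank $\alpha$ and $\B\cong\A$ computable.
\begin{enumerate}
\item Every orbit in $\A$ is defined by a $\Sinf{\alpha}$ formula. Since $\A$ is computable and $\alpha<\ock$, the standard relativization argument makes these formulas $\Sicom{\alpha}$ relative to some $\Delta^0_{2\alpha+1}$ oracle; this step needs care.
\item Satisfaction of $\Sicom{\alpha}$ formulas in a computable structure is $\Sigma^0_\alpha$. Since every orbit is $\Sinf{\alpha}$-definable, the $\Pinf{\alpha}$ type of a tuple is implied by its orbit formula.
\item The back-and-forth relation between tuples of $\A$ and $\B$ is then: $\bar a$ and $\bar b$ satisfy the same orbit-defining $\Sinf\alpha$ formula. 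We need to find, for $\bar a$, a formula $\phi$ in the orbit family satisfied by $\bar b$ with $\bar a$ in the orbit. Choosing which formula defines the orbit of $\bar a$ and verifying it adds roughly $\alpha+1$ more quantifiers.
\item Equivalently, we can argue directly with the standard back-and-forth relations $\le_\beta$: Scott rank $\alpha$ means the $\le_\alpha$ relation determines orbits. We want the family $\{(\bar a,\bar b): \bar a\equiv_\alpha \bar b\}$ restricted to $\A\times\B$, plus extendability, to be $\Delta^0_{2\alpha+1}$.
\item If the orbit of $\bar a$ is defined by a $\Sinf\alpha$ formula $\phi_{\bar a}$, then $\bar b\models\phi_{\bar a}$ iff $\bar b\ge_\alpha \bar a$. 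Computing an isomorphism by back-and-forth requires deciding ``$\bar a\bar c$ and $\bar b\bar d$ are in corresponding orbits.'' This is a $\Pi_{2\alpha}$-ish condition: $\Pi^0_{2\alpha}$ for $\le_\alpha$ in both directions, which is $\Pi^0_{2\alpha}$ for finite $\alpha$ and $\Delta^0_{2\alpha+1}$ in general. The $\Delta$-form unifies the footnote's cases.
\item With this relation $\*d$-computable, run the back-and-forth construction, always picking a witness $\bar d$ with $\bar a\bar c\equiv_\alpha\bar b\bar d$. Such a witness exists because $\equiv_\alpha$ coincides with being automorphic (via the isomorphism).
\end{enumerate}
The main obstacle is pinning down that ``$\bar a\equiv_\alpha\bar b$'' across two computable structures is $\Pi^0_{2\alpha}$ (respectively $\Delta^0_{2\alpha+1}$ at infinite levels). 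I would prove this by induction on the back-and-forth relations: each level alternates a $\forall\exists$, costing two jumps per level. The successor step is straightforward; at limit levels the intersection is uniformly $\Pi^0_{2\lambda}$ and we take $\Delta^0_{2\lambda+1}$.

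\textbf{Lower bound.} Given a $\Delta^0_{2\alpha+1}$ set $X$, build for each $n$ a pair of computable structures of Scott rank at most $\alpha$ that are isomorphic, but such that any isomorphism reveals whether $n\in X$. Then glue uniformly as a disjoint union over $n$ (using the remark that ``at most $\alpha$'' suffices). The natural tool is the Ash--Knight pairs of structures $\A_\beta,\B_\beta$ that are $\beta$-back-and-forth equivalent in one direction, with uniformly computable copies for $\Sigma^0_{2\alpha}$/$\Pi^0_{2\alpha}$ guessing. For example, use ordinals $\omega^\alpha$ versus $\omega^\alpha\cdot 2$-type components, or trees of rank about $\alpha$, whose orbits are $\Sinf\alpha$-definable.

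For $n$, use $X$ as $\Delta^0_{2\alpha+1}$: write it as the limit of a $\Sigma^0_{2\alpha}$-type approximation, or via a pair of $\Sigma^0_{2\alpha+1}$ sets. Then build a structure $\C_n = \mathcal{E}\oplus\mathcal{F}$ where the labeled component $\mathcal{E}$ is isomorphic to a rank-$\alpha$ structure $\mathcal{K}_0$ if $n\in X$ and to $\mathcal{K}_1$ otherwise; the other component is the opposite. Take the second copy $\C'_n$ with fixed labels. An isomorphism $\C_n\to\C'_n$ must match components by isomorphism type, so it decodes $X(n)$. The key check is that $\mathcal{K}_0,\mathcal{K}_1$ are distinguishable only at level $2\alpha$ in the computable-copy sense while still having Scott rank $\le\alpha$. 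This is why the factor $2$ appears: a $\Sinf\alpha$ orbit formula evaluated in a computable structure is $\Sigma^0_{2\alpha}$-hard to pin down through the $\forall\exists$ alternation. I expect verifying the Scott rank bound on these coding structures to be the delicate part.
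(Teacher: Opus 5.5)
Your upper bound (steps 4--6) is the paper's argument. Because every orbit is $\Sinf{\alpha}$-definable, the set of pairs $(\bar a,\bar b)$ with $(\A,\bar a)\le_\alpha(\B,\bar b)$ is already a back-and-forth set. It is $\Pi^0_{2\alpha}$ by induction on the back-and-forth relations, so a degree computing all $\Delta^0_{2\alpha+1}$ sets can run the back-and-forth construction along it. Steps 1--3 are unnecessary. The lower bound, however, has a genuine gap, and the mechanism you propose cannot be repaired.

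\textbf{Why the coding fails.} Your scheme needs fixed non-isomorphic $\mathcal{K}_0,\mathcal{K}_1$ of Scott rank $\le\alpha$, and a uniformly computable sequence $(\mathcal{E}_n)$ of copies of them, such that $X=\{n:\mathcal{E}_n\cong\mathcal{K}_0\}$ computes every $\Delta^0_{2\alpha+1}$ set. That is all an isomorphism $\C_n\to\C'_n$ is forced to reveal. No such pair exists, for the following reasons.
\begin{itemize}
\item Two structures all of whose orbits are $\Sinf{\alpha}$-definable, and which satisfy the same $\Sinf{\alpha}$ sentences, are isomorphic. Pair $\bar a\in\mathcal{K}_0$ with $\bar b\in\mathcal{K}_1$ when $\bar b$ satisfies the orbit formula $\phi_{\bar a}$. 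Since $\Sinf{\alpha}$ is closed under $\wedge$ and $\exists$ for $\alpha\ge 1$, sentences such as $\exists\bar x\exists y\,(\phi_{\bar b}(\bar x)\wedge\phi_{\bar a c}(\bar x,y))$ transfer between the structures, which gives the back-and-forth property.
\item Hence $\mathcal{K}_0$ and $\mathcal{K}_1$ are separated by a $\Sinf{\alpha}$ sentence. Passing to a disjunct, they are separated by some $\exists\bar x\,\psi(\bar x)$ with $\psi\in\Pinf{\gamma}$ for some $\gamma<\alpha$, true in (say) $\mathcal{K}_0$ at a tuple $\bar k$ and false in $\mathcal{K}_1$.
\item Then $n\in X$ iff some $\bar x\in\mathcal{E}_n$ satisfies every $\Pinf{\gamma}$ formula true of $\bar k$ in a computable copy of $\mathcal{K}_0$. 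Since $\le_\gamma$ is $\Pi^0_{2\gamma}$, this condition is $\Sigma^0_{2\gamma+1}$.
\item So $X$ or its complement is $\Sigma^0_{2\gamma+1}$. Hence $X$ is $\Delta^0_{2\alpha}$ and cannot compute every $\Delta^0_{2\alpha+1}$ set.
\end{itemize}
Thus the ``key check'' you postpone is not delicate but false. Ash--Knight pairs would need $\mathcal{K}_0\equiv_{2\alpha}\mathcal{K}_1$, which by the above already forces $\mathcal{K}_0\cong\mathcal{K}_1$. Your candidate pair shows the tension: $\omega\equiv_2\omega\cdot 2$, but $\omega$ has Scott rank $2$ and $\omega\cdot2$ has Scott rank $3$, so neither fits $\alpha=1$.

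\textbf{What is missing.} The factor $2$ has to come from non-uniformity in \emph{which} simple formula defines an orbit, not from telling two isomorphism types apart. The paper's base case (\cref{lem:sr1dgcat''}) works as follows.
\begin{itemize}
\item It takes a modulus $f(n)=\liminf_s g(n,s)$ for $\emptyset''$, obtained via $\mathbf{0}'$-limitwise monotonic functions.
\item In each $n$-slice, the components $b_m$ with $m>g(n,s)$ are repeatedly homogenized, while those with $m\le g(n,s)$ receive private cycles when $g$ drops.
\item In $\G$ there is an extra point $a$ whose component matches that of $b_m$ exactly for $m>f(n)$.
\end{itemize}
All orbits are existentially definable, so the structure has Scott rank $1$. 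Yet any isomorphism must send $a^n$ to some $b^n_m$ with $m>f(n)$. So it computes a function dominating the modulus, and hence computes $\emptyset''$: the isomorphism reveals a bound, not a bit.

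The successor case relativizes this construction. It then applies the unfriendly jump inversions of \cite{harrison-trainor2025}, which trade two jumps for one level of Scott rank, followed by friendly jump inversions with $(\omega^\lambda,\omega^\lambda\cdot 2)$. The limit case uses an analogous modulus construction. None of these ingredients appears in your proposal.
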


The easy direction is taken care of by the following lemma.
\begin{lemma}
  For $0<\alpha<\ock$, if $\*d$ computes every $\Delta^0_{2\alpha+1}$ set, then $\*d$ is high for Scott rank $\alpha$.
\end{lemma}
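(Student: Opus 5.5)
Fix computable $\A\cong\B$ with $\A$ of Scott rank $\alpha$. We build an isomorphism $\B\to\A$ by a back-and-forth construction, using $\*d$ to decide at each stage whether two tuples realize the same orbit. Concretely, we want $\*d$ to compute the relation
\[
R=\{(\bar a,\bar b) : \bar a\in\A,\ \bar b\in\B,\ (\A,\bar a)\cong(\B,\bar b)\}.
\]
Given $R$, the back-and-forth is routine. Start with the empty partial map, which is in $R$ since $\A\cong\B$. Alternately, take the next element of $\B$ (respectively $\A$). Search for an extension of the current pair $(\bar a,\bar b)\in R$ by one element on the other side that keeps the pair in $R$. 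Such an extension exists because the pair is isomorphic, so the search halts. The union of the stages is a $\*d$-computable isomorphism $\B\to\A$.

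It remains to show that $R$ is $\Delta^0_{2\alpha+1}$. By the definition of Scott rank $\alpha$, each orbit of $\bar a$ in $\A$ is defined by some $\Sinf{\alpha}$ formula $\phi_{\bar a}$; by the Remark, for limit $\alpha$ we may take it $\Sinf{\beta}$ for some $\beta<\alpha$. Since $\A\cong\B$, we have $(\bar a,\bar b)\in R$ iff $\B\models\phi_{\bar a}(\bar b)$. The difficulty is that these formulas need not be computable, so this characterization does not directly give an arithmetic bound. We replace it with the following uniform standard fact. For structures of Scott rank $\alpha$, $\bar a$ and $\bar b$ are automorphic iff they satisfy the same $\Pinf{\alpha}$ formulas. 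Equivalently, $\bar a\le_{\alpha}\bar b$ and $\bar b\le_{\alpha}\bar a$ in the back-and-forth relations, suitably adjusted so that the orbit-defining $\Sinf{\alpha}$ formula is captured. Montalbán's characterization gives that Scott rank $\le\alpha$ is equivalent to every $\Pinf{\alpha}$-type being isolated, hence determining the orbit.

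Next we bound the complexity of the back-and-forth relations. Across the computable structures $\A$ and $\B$, the relations $\le_\gamma$ are $\Pi^0_{2\gamma}$, and the recursion through ordinals below $\ock$ is uniform using a computable ordinal notation. The clause defining $\le_{\gamma+1}$ adds a $\forall\exists$ block, which accounts for the factor of $2$. So "same $\Pinf{\alpha}$ type" is $\Pi^0_{2\alpha}$ at worst. To decide $R$ we need both this relation and its complement, and the complement is $\Sigma^0_{2\alpha}$. Both lie in $\Delta^0_{2\alpha+1}$, so $\*d$ computes $R$.

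The main obstacle is getting the exact level right, with the offset between finite and infinite $\alpha$ and the limit case. For limit $\lambda$, $\le_\lambda$ is a conjunction over $\gamma<\lambda$. This conjunction is $\Pi^0_{2\lambda}=\Pi^0_\lambda$ in the limit ordinal arithmetic, and $\Delta^0_{\lambda+1}$ suffices. For finite $n$ the footnote says $0^{(2n)}$ suffices, which is consistent with a $\Pi^0_{2n}$ relation being $\Delta^0_{2n+1}$. I would first verify carefully, by induction on $\gamma$ with uniform indices, that $\{(\bar a,\bar b):\bar a\le_\gamma\bar b\}$ between two computable structures is $\Pi^0_{2\gamma}$. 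If the equivalence "same $\Pinf\alpha$ type implies automorphic" turns out to need $\Pinf{\alpha+1}$ types in some cases, the bound only rises to $\Pi^0_{2\alpha+1}$. I would then instead use the direct characterization through $\Sinf\alpha$ orbit definitions, together with the fact that $\Sinf\alpha$ satisfaction of a fixed orbit is $\Sigma^0_{2\alpha}$-like relative to the $\le$ relations, to stay within $\Delta^0_{2\alpha+1}$.
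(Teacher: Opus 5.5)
Your proposal is correct and is essentially the paper's proof: the paper takes $\{(\bar a,\bar b) : (\A,\bar a)\leq_\alpha(\B,\bar b)\}$ as the back-and-forth set, which coincides with your isomorphism relation $R$ because orbits are $\Sinf{\alpha}$-definable. It notes by induction on the back-and-forth relations (with the same limit-step computation as yours) that this set is $\Pi^0_{2\alpha}\subseteq\Delta^0_{2\alpha+1}$, and then runs the standard back-and-forth. The contingency in your last paragraph never arises, since the negation of the orbit-defining $\Sinf{\alpha}$ formula is $\Pinf{\alpha}$, so sharing $\Pinf{\alpha}$ types already forces $\bar a,\bar b$ to be automorphic; indeed one direction of $\leq_\alpha$ suffices. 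Its estimate would also be off, as $\Pinf{\alpha+1}$ types cost $\Pi^0_{2\alpha+2}$, not $\Pi^0_{2\alpha+1}$.
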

\begin{proof}
  Recall from Karp~\cite{karp1965b} that $(\A,\bar a)\leq_\alpha (\B,\bar b)$ if and only if every $\Sinf{\alpha}$ formula true of $(\B,\bar b)$ is true of $(\A,\bar a)$. Fix a computable structure $\A$ of Scott rank $\alpha$ and an isomorphic copy $\B$. Then 
  \[ B=\{ (\bar a,\bar b) \colon (\A, \bar a)\leq_\alpha (\B, \bar b)\}\]
  is a back-and-forth set for $\A$ and $\B$. By a straightforward induction on the back-and-forth relations $\leq_\alpha$ we have that $B$ is $\Delta_{2\alpha+1}^0$. Let us quickly illustrate the limit step. For $\alpha$ limit, we have that $(\A, \bar a)\leq_\alpha (\B, \bar b)$ if and only if $(\forall \beta<\alpha) (\A,\bar a)\leq_\beta (\B,\bar b)$. This is $\Pi^0_{\alpha}$, so any set computing all $\Delta_{2\alpha+1}^0=\Delta_{\alpha+1}^0$ sets can compute it. Furthermore, given $B$, we can compute an isomorphism between $\A$ and $\B$ using a standard back-and-forth argument.
\end{proof}
\subsection{Base case: Highness for Scott rank 1}
We now lay the groundwork for the base case of \cref{thm:highforsr}.
See~\cite{downey2011} for an overview of limitwise monotonic functions and their implications, including proofs of the unproven lemmas here. 

\begin{definition}\label{def:alimmon}
  A function $F$ is \define{$\mathbf{a}$-limitwise monotonic} if there is an $\mathbf a$-computable approximation function $f$ such that  for all $x$,
  \begin{enumerate}
    \item $F(x)=\lim_s f(x,s)$,
    \item and for all s, $f(x,s)\leq f(x,s+1)$.
  \end{enumerate}
\end{definition}
\begin{lemma}[Harris~\cite{harris2008} and Kach~\cite{kach2008}]\label{prop:liminfiff0'limmon}
   A function $F$ is $\mathbf 0'$-limitwise monotonic if and only if there is a computable function $g$ such that $F(n)=\liminf_s g(n,s)$. 
\end{lemma}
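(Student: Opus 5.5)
We prove the two directions separately. In each, the statement ``$F$ is $\mathbf 0'$-limitwise monotonic'' is unpacked via \cref{def:alimmon}: there is a $\mathbf 0'$-computable $f$, nondecreasing in $s$, with $F(x)=\lim_s f(x,s)$.

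\emph{Right to left.} Assume $g$ is computable and $F(n)=\liminf_s g(n,s)$, with the $\liminf$ finite for every $n$. Define
\[ f(n,s)=\min\{g(n,t)\colon t\geq s\}.\]
The expression ``$\min_{t\ge s} g(n,t)\ge k$'' is $\Pi^0_1$, so $\mathbf 0'$ computes $f(n,s)$. Concretely, $\mathbf 0'$ takes the value $m=g(n,s)$ and, for each $k\le m$ in decreasing order, asks whether $\exists t\ge s\, g(n,t)<k$. The function $f$ is nondecreasing in $s$, because the set $\{t : t \geq s\}$ shrinks as $s$ grows. Its limit is $\liminf_s g(n,s)=F(n)$.

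\emph{Left to right.} Now let $f$ be $\mathbf 0'$-computable and monotone with limit $F$. By the limit lemma, $f(x,s)=\lim_t h(x,s,t)$ for some computable $h$. We build $g(n,\cdot)$ by the following strategy. At stage $t$, find the largest $s\le t$ such that the approximations $h(n,0,t)\le h(n,1,t)\le\dots\le h(n,s,t)$ all appear to be ``settled'', meaning each has not changed since some earlier stage. Then output $g(n,t)=h(n,s,t)$.

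More robustly, first pass to a modified approximation. Let $s_t$ be the length of agreement, i.e.\ the largest $s\le t$ such that $h(n,u,t)=h(n,u,t-1)$ for all $u\le s$ and such that these values are nondecreasing in $u$. Set $g(n,t)=h(n,s_t,t)$.

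We verify that this $g$ works by checking the $\liminf$.
\begin{itemize}
\item Fix $s$. Eventually $h(n,u,t)=f(n,u)$ for all $u\le s$. Then at all sufficiently large $t$ except the finitely many stages where something $\le s$ changes, we have $s_t\ge s$, and the output is $\ge f(n,s)$, provided the displayed values above $s$ are monotone. This gives $\liminf_t g(n,t)\ge F(n)$.
\item Conversely, consider the stages $t$ where $s_t$ is exactly at the true settled portion, for instance those where the first not-yet-correct position is just changing. At these stages the output is a true value $f(n,u)\le F(n)$, and infinitely many such stages exist. This gives $\liminf_t g(n,t)\le F(n)$.
\end{itemize}

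The main obstacle is making the lower bound hold at \emph{all} late stages. Wrong values at positions beyond the settled part may be small and could drag the $\liminf$ down. The fix I would try first: output the value at the largest $s$ \emph{only among $u\le s_t$ where monotonicity holds}, so that the output is always at least the correct settled values. Then verify that infinitely many stages have $s_t$ bounded while correct (upper bound), or alternatively that the outputs converge along a subsequence to $F(n)$.

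If this becomes messy, I would instead cite the standard argument in \cite{downey2011}, which the paper indicates contains the proof.
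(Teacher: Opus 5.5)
The paper does not prove this lemma. It attributes the result to Harris and Kach and refers to \cite{downey2011}, so there is no in-paper argument to match.

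Your right-to-left direction, via $f(n,s)=\min_{t\ge s}g(n,t)$, is correct and complete.

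Your ``more robust'' left-to-right construction is also correct as stated, but you have misplaced where the work lies.

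\emph{The lower bound needs no fix.} The ``main obstacle'' is already handled by the monotonicity clause in the definition of $s_t$. Since $h(n,0,t)\le\dots\le h(n,s_t,t)$, the output equals $\max_{u\le s_t}h(n,u,t)$. Fix $s^*$ with $f(n,s^*)=F(n)$. Once the columns $u\le s^*$ have settled, you get $s_t\ge s^*$ and hence $g(n,t)\ge F(n)$ at every late stage. A small wrong value beyond $s^*$ only truncates $s_t$, never below $s^*$.

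\emph{The upper bound needs an argument.} Your sketch asserts ``infinitely many such stages exist'' without justification. Here is one.
\begin{enumerate}
\item Let $w_t$ be the least $u\le t$ with $h(n,u,t)\ne f(n,u)$. Each column converges, so $w_t\to\infty$.
\item If column $w_{t-1}$ does not change at stage $t$, it is still wrong, so $w_t\le w_{t-1}$.
\item Hence either infinitely many stages have no wrong column $u\le t$, where the output is a true value, or column $w_{t-1}$ changes at stage $t$ for infinitely many $t$.
\item At such a stage $s_t<w_{t-1}$. Every $u\le s_t$ was correct at $t-1$ and is unchanged at $t$, so $g(n,t)=f(n,s_t)\le F(n)$.
\end{enumerate}

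\emph{Small repairs.} You need a default output for the finitely many $t$ at which column $0$ changes and no $s$ qualifies. You should also drop the vaguer first version (``settled since some earlier stage'') and the fallback to a citation.

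\emph{Comparison with a standard alternative.} One can work directly with the $\emptyset'$-computation of $f$. At stage $t$, output the maximum of those values $f(n,s)$, $s\le t$, whose computations converge with oracle $\emptyset'_t$ under the hat convention. Correct computations are eventually always visible, which gives $\liminf\ge F(n)$. At $\emptyset'$-true stages every visible computation is correct, which gives infinitely many outputs $\le F(n)$. In your argument, the stages at which the least wrong column is corrected play exactly the role of true stages. Your version replaces reasoning about oracle uses with the limit lemma plus a length-of-agreement device. It is equally short once the upper-bound argument above is written out.
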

\begin{lemma}\label{lem:0''liminfmod}
  The set $\emptyset''$ has a modulus $f$ that is liminf computable, i.e., $f(n)=\liminf_s g(n,s)$ for some computable function $g$.
 \end{lemma}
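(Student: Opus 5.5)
The plan is to build a modulus for $\emptyset''$ directly from a $\Pi^0_2$ (equivalently $\Sigma^0_2$-complement) presentation, approximate it from below in the "$\liminf$" sense, and then apply \cref{prop:liminfiff0'limmon}. Since $\emptyset''$ is $\Sigma^0_2$, fix a computable relation $R$ such that $n\in\emptyset''$ if and only if $\exists k\,\forall t\, R(n,k,t)$. For each $n$ and $k$, let $\ell(n,k,s)$ be the largest $t\le s$ such that $R(n,k,t')$ holds for all $t'<t$. Then $k$ is a true witness for $n$ exactly when $\ell(n,k,s)\to\infty$, and otherwise $\ell(n,k,s)$ is eventually constant.

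Next I would define the candidate modulus. For each $m\le n$ with $m\in\emptyset''$, let $k_m$ be its least true witness. Let $f(n)$ be a number large enough that $\emptyset''\restrict (n+1)$ can be recovered from the finite data available below $f(n)$. Concretely, take $f(n)=\max_{m\le n,\, m\in\emptyset''} \langle k_m, c_m\rangle$, where $c_m$ is a bound on the stage by which every $k<k_m$ has been refuted, i.e.\ $c_m$ is least with $\neg R(m,k,t)$ for some $t<c_m$ for each $k<k_m$. Any $h\ge f$ is then a modulus. Given $h(n)$, one searches among pairs $\langle k,c\rangle\le h(n)$ whose smaller witnesses are refuted before $c$, and uses the oracle $\emptyset'$ to test whether $\forall t\,R(m,k,t)$ holds. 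I would verify this carefully, since the modulus notion here means that any function dominating $f$ computes $\emptyset''$. An alternative is to simply take $f$ to be the settling-time style function and show domination suffices.

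The computable $g(n,s)$ is defined as follows. At stage $s$, for each $m\le n$, guess the witness $k_m[s]$ to be the least $k\le s$ whose $\ell(m,k,s)$ has just increased (an expansionary stage for $k$), or declare $m\notin\emptyset''$ if there is none. Then set $g(n,s)$ to be the maximum of the corresponding codes $\langle k_m[s],c_m[s]\rangle$, or something huge (such as $s$) when the guess is not at an expansionary stage.

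The main obstacle is the verification that $\liminf_s g(n,s)=f(n)$. There are two directions to check. First, at infinitely many stages, namely those expansionary for all true least witnesses simultaneously, the guesses are correct, so $g=f$ infinitely often. For simultaneous expansion, I would use a single combined witness for the finitely many $m\le n$, or define stages by a $\Pi^0_2$ conjunction. Second, only finitely many stages give values below $f(n)$. This needs wrong guesses to produce large values, which I would arrange by making a guess $k<k_m$ look true only at stages where its $\ell$ increases; for false $k$ this happens finitely often. A guess $m\notin\emptyset''$ when in fact $m\in\emptyset''$ must also be prevented from yielding small values, which is why non-expansionary stages output $s$. Getting these bookkeeping details right, especially handling $m\notin\emptyset''$ with spurious witnesses contributing only large values, is where I expect the work to lie.
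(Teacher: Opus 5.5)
Both halves of your argument have a genuine gap.

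\emph{The modulus property.} Your decoding tests $\forall t\,R(m,k,t)$ using the oracle $\emptyset'$. But a function $h\ge f$ is not handed $\emptyset'$. You must show that every such $h$ computes $\emptyset'$, and your definition of $f$ gives no visible reason for this.

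The paper builds this into $f$. It takes $f=m+m_1$, where $m$ is the settling-time function of $\emptyset'$, so any $h\ge f$ computes $\emptyset'$. The second summand $m_1$ is the settling-time function of an $\emptyset'$-computable enumeration of $\emptyset''$, so $h$, now having $\emptyset'$, reads off $\emptyset''\restrict n$ at stage $h(n)$. Adding $m$ to your $f$ repairs your version in the same way.

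Your $f$ can in fact be saved as is, but only by an extra recursion-theorem argument. Choose $m_x$ uniformly so that $m_x\in\emptyset''$ iff either $x\notin\emptyset'$, or every $k$ below the stage $s_x$ at which $x$ enters $\emptyset'$ is refuted for $m_x$. This forces $m_x\in\emptyset''$ and $k_{m_x}\ge s_x$, so $x\in\emptyset'$ iff $x\in\emptyset'_{h(m_x)}$. Nothing of this kind is in your proposal.

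\emph{Liminf computability.} Your direct construction of $g$ fails as sketched, and the failure is not just bookkeeping. Here is a counterexample:
\begin{itemize}
\item Replace a $t$-monotone $R_0$ by $R(m,\langle k,j\rangle,t)\iff R_0(m,k,t)\lor t<\langle k,j\rangle$. This represents the same set.
\item Now witness $s$ has just expanded at every stage $s$, so your rule never guesses ``out''.
\item For $m\le n$ with $m\notin\emptyset''$, the guessed witness is therefore always spurious. It tends to infinity, since each fixed false witness expands only finitely often.
\item Hence $g(n,s)\to\infty$ and $\liminf_s g(n,s)=\infty\neq f(n)$.
\end{itemize}

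Making spurious guesses contribute large values is exactly the wrong requirement. You need infinitely many stages at which every non-member $m\le n$ contributes nothing while every member is guessed correctly, and also $g(n,s)\ge f(n)$ at almost all stages. Arranging this amounts to reproving \cref{prop:liminfiff0'limmon}.

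You cite that lemma in your first sentence but never verify its hypothesis, and verifying it is easy. Your $f$ is $\mathbf 0'$-limitwise monotonic: with oracle $\emptyset'$ one can enumerate $\emptyset''$ and, for each $m\le n$ enumerated so far, compute $k_m$ and $c_m$. The running maximum is nondecreasing with limit $f(n)$. This is the paper's route: $m$ and $m_1$ are $\mathbf 0'$-limitwise monotonic, hence so is their sum, and the Harris--Kach lemma converts this into a computable $\liminf$ approximation.
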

 \begin{proof} Let $m$ be the modulus of computation for $\emptyset'$, i.e., 
  \[ m(n)=\mu s [\emptyset'_s\restrict n=\emptyset'\restrict n].\]
  Then it is straightforward to see that $m$ is limitwise monotonic and, relativizing this, we can define the analogous function $m_1$ for $\emptyset''$, which is $\mathbf 0'$-limitwise monotonic. The function $f$ to be defined as $f(n)=m(n)+m_1(n)$ has the property that if $g$ dominates $f$, then $g\geq_T \emptyset '$ as $m$ is a modulus but also $g\geq_T \emptyset''$, since $g$ dominates $m_1$ and any function able to compute $\emptyset'$ can run the $\emptyset'$-computable approximation of $\emptyset''$ for which $m_1$ is a modulus. Thus, $f$ is a modulus for $\emptyset''$. It is clearly $\mathbf 0'$-limitwise monotonic as limitwise monotonicity is closed under addition. By \cref{prop:liminfiff0'limmon} it is liminf computable.
 \end{proof}
 The following lemma provides the base case of \cref{thm:highforsr}. In an unpublished preprint Csima and Stephenson have proven the same result using a different construction.  
 \begin{lemma}\label{lem:sr1dgcat''}
   There is a computable structure $\A$ with Scott rank $1$ and degree of categoricity $\mathbf 0''$. In particular $\A$ has two computable copies $\mathcal{G}$ and $\B$ so that every isomorphism between $\+G$ and $\B$ computes every $\Delta^0_3$ set.
 \end{lemma}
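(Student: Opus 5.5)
We build $\A$ as a disjoint union of simple components. Each component has a computable ``size'' that some copy may reveal only slowly, so that recovering the matching of components encodes the liminf-computable modulus $f$ for $\emptyset''$ from \cref{lem:0''liminfmod}. Fix a computable $g$ with $f(n)=\liminf_s g(n,s)$.

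For each $n$ the structure has a component $C_n$, marked by a unary predicate or a distinguished root labelled $n$, so the label $n$ is computably visible in every copy. Inside $C_n$ there is one ``special'' chain of length $f(n)$, or equivalently a special element with exactly $f(n)$ children, together with infinitely many chains of every finite length. In addition, $C_n$ contains infinitely many chains of every length $k$. The special chain is distinguished only by a unary marker attached somewhere that is not locally visible. A cleaner choice is to let $C_n$ be the disjoint union of infinitely many copies of each finite cycle plus one extra copy of a cycle of length $f(n)$, where $f(n)$ is coded by having exactly one ``red'' cycle, of length $f(n)$.

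The Scott rank $1$ check proceeds as follows. Every element lies in a finite cycle, and its orbit is determined by its component label, the cycle length, its position, and its colour. All of these are $\Sinf{1}$-definable: there is a $\Sigma_1$ formula exhibiting the cycle, and the colour and label are atomic. Tuples are handled the same way, since distinct cycles are independent. Hence every orbit is $\Sinf 1$-definable, and the structure is not $\Sinf0$-homogeneous, so its Scott rank is exactly $1$.

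For the computable copies we take $\G$ to be the copy in which the red cycle of $C_n$ is built using the liminf approximation. At stage $s$ we try to make the red cycle have length $g(n,s)$. Whenever $g(n,s)$ drops below the current attempt, we close off the current red attempt as a non-red cycle of its current length, which is harmless because infinitely many such cycles exist. A fresh red cycle is then started. Because the colour must be decided when an element is created, the construction instead grows a red path and closes it into a cycle only once the approximation has been stable long enough. The liminf guarantees that the final red cycle is finite, of length exactly $f(n)$. For $\B$ we take the standard computable copy, with red cycles laid out by a fixed $\emptyset''$-independent strategy. This is the main obstacle: both copies must be computable, and they must be isomorphic to a single $\A$ whose red length is $f(n)$. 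One option is to let $\B$ also follow the liminf approximation but with a different timing. Alternatively, we make $\A$ itself computable by coding $f(n)$ into the red cycle of $\G$ while $\B$ presents the red element as a single red vertex whose cycle size is found only in the limit. In either case, an isomorphism $h\colon\G\to\B$ gives, on input $n$, a red element of $\B$ with $h$-preimage in $\G$. Searching the cycle in $\G$ until it closes then yields $f(n)$.

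To obtain the lower bound, note that from any isomorphism we compute the function $n\mapsto f(n)$, or at least a function dominating $f$. Since $f$ is a modulus of $\emptyset''$, we compute $\emptyset''$ and hence every $\Delta^0_3$ set. For the upper bound, the back-and-forth lemma above with $\alpha=1$ shows that $\mathbf 0''$ computes isomorphisms between any two copies: for finite $\alpha$ we need $\Delta^0_{2\alpha+1}=\Delta^0_3$, which is $\emptyset''$. So the degree of categoricity is exactly $\mathbf 0''$. The delicate verification is the construction of $\G$ using only the liminf approximation with monotone-free drops. The key fact is that every time $g$ drops we discard an attempt, yet since infinitely many non-red cycles of each length are present, the isomorphism type is unaffected.
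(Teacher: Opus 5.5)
There is a genuine gap, and it is at the core of the construction. The structure you settle on has no computable copy. In each slice $n$ it has infinitely many plain cycles of each length plus one red cycle of length $f(n)$, with red an atomic predicate. In any presentation one finds the unique red element of slice $n$ by search, follows edges until its cycle closes, and reads off $f(n)$. So $f$, and hence $\emptyset''$, would be computable.

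Your lower-bound step does exactly this inside $\G$ and never uses $h$, which is a symptom of the same problem. It also explains why your sketch of $\G$ cannot be made effective. Colour is fixed when an element is created, so a red attempt cannot later be ``closed off as non-red''. Closing a cycle is irreversible. A liminf approximation never certifies stability, since it may dip below $f(n)$ finitely often at unknown stages.

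Even if such a copy existed it would be useless. Any structure whose components are all finite labelled cycles is computably categorical: in every copy each component and its type can be found effectively, and a greedy back-and-forth matching of components by type succeeds. So the $\emptyset''$-information cannot sit in a feature readable from a single copy. It must sit in the matching between two copies, and the relevant components must not be effectively recognisable. Your vaguer ``marker that is not locally visible'' points this way but is never specified.

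The missing idea, which is how the paper proceeds, is to let the isomorphism reveal only an upper bound for $f(n)$, through an index.
\begin{enumerate}
\item In slice $n$ both copies contain points $b_0,b_1,\dots$ created in a computable order.
\item At stage $s$, all $b_m$ with $m>g(n,s)$ receive new cycles and are homogenised. When $g$ drops, each $b_m$ with $m\le g(n,s)$ gets a cycle not attached to $b_{g(n,s)+1}$.
\item Since $g(n,s)=f(n)$ infinitely often and $g(n,s)<f(n)$ only finitely often, the $b_m$ with $m>f(n)$ end up with pairwise isomorphic infinite components. Those with $m\le f(n)$ end up with finite components that are distinguished from this generic type.
\item $\G$ has one extra point $a^n$ whose component copies that of the newest $b_s$. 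Its limit type occurs infinitely often, so $\G\cong\B$.
\item Any isomorphism must send $a^n$ to some $b^n_m$ with $m>f(n)$. The induced map $n\mapsto m$ dominates the modulus $f$, and that is all that is needed.
\end{enumerate}
You do mention domination, but your construction provides no mechanism that yields it. Your upper bound via the easy-direction lemma is fine. The Scott rank $1$ verification, however, has to be redone for these partly infinite components.
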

 \begin{proof}
   We will build two copies of $\A$, $\mc G$ and $\B$ so that $\mathbf 0''$ is the least degree computing an isomorphism between them. Fix a liminf computable modulus for $\mathbf 0''$ given via the computable function $g$. Our structure will be a graph with additional unary relation symbols $R_n$ that partition the universe into infinitely many infinite sets. We will call the part of the universe where $R_n$ holds the \define{$n$-slice}.

   Let us start by describing the construction of the $n$-slice in $\B$ for fixed $n$ in stages.
   At stage $0$, the slice is empty. We now describe the construction at stage $s$, having finished our construction at stage $s-1$.
   \begin{enumerate}
     \item Add a new point $b_s$ to the slice.
     \item If $g(n,s)\geq g(n,s-1)$, add a cycle of length $s$ to every $b_m$ with $m>g(n,s)$ and homogenize all such points, i.e., add cycles so that all points $b_m$, $m>g(n,s)$ have isomorphic connected components.
     \item If $g(n,s)<g(n,s-1)$, homogenize all points $b_m$ for $m>g(n,s)$. For the points $b_m$ with $m\leq g(n,s)$ we want them to contain a cycle that does not appear in the connected component of $b_{g(n,s)+1}$. We thus check for all $m\leq g(n,s)$ if there is a cycle connected to $b_m$ of size different than the cycles connected to $b_{g(n,s)+1}$. If so, then we do nothing. If not, then we add a cycle of length $s$ to $b_m$.
     \item End of construction at stage $s$.
   \end{enumerate}

   For the construction of $\G$ we proceed as in the construction of $\B$ except that we call the new point $a_s$ and add a point $a$ at stage $0$. At each stage $s$ we ensure that the connected component of $a$ is isomorphic to the connected component of $b_s$ at the end of this stage (we may assume that $s>g(n,s)$).
   \begin{claim}\label{cl:atobm}
     The connected component of $a$ is isomorphic to the connected component of $b_m$ for all $m>\liminf_s g(n,s)$.
   \end{claim}
   \begin{proof}
     Say $\liminf_s g(n,s)=k$. Then points $b_s$ with $s>k$ will be homogenized infinitely many times and all be isomorphic. As the connected component of $a$ at a stage $s$ always copies a connected component of a point larger than $g(n,s)$, in the limit its component will be isomorphic to the components of $b_s$ for $s>k$.
   \end{proof}
   \begin{claim}\label{cl:isofin}
     All points $a_m$ and $b_m$ with $m\leq \liminf_s g(n,s)$ have finite connected components. 
     Furthermore, for all $m$, the connected components of $a_m$ and $b_m$ are isomorphic.
   \end{claim}
   \begin{proof}
     Say $\liminf_s g(n,s)=k$. Then there is a least stage $s$ such that $\inf_{t>s} g(n,t)= k$. Say that $s_0$ is the first stage after $s$ such that $g(n,s_0)=k$. Then after this stage all $b_m$ with $m\leq k$ will have a cycle of a length that does not appear among cycles attached to $b_l$ for $l>k$. Furthermore, none of the $b_m$, $m\leq k$ will be homogenized anymore, so they won't obtain new cycles after stage $s_0$. This together with the fact that at each stage only finitely many cycles are added to each component guarantees that all these $b_m$ have finite connected components.
     The last sentence in the claim follows from the symmetry of our constructions for $\A$ and $\B$.  
   \end{proof}
   \cref{cl:isofin} implies directly that $\G\cong \B$. The next claim shows that the degree of categoricity needs to be at least $\mathbf 0''$. 
   \begin{claim}
   Any isomorphism $f:\G\cong \B$ computes $\emptyset''$.
 \end{claim}
 \begin{proof}
 Let $a^n$ be the element corresponding to $a$ in the $n$-slice and $b^n_m$ the elements corresponding to $b_m$ in that slice. Then by \cref{cl:atobm}, $f(a^n)=b_m^n$ for some $m>\liminf_s g(n,s)$ and so $f$ clearly computes the function $h$ such that $f(a^n)=b_{h(n)}^n$. But then $h(n)\geq \liminf_{n,s} g(n,s)$ and as the latter function was the modulus for $\emptyset''$, $f\geq_T h\geq_T \emptyset''$.
 \end{proof}
 That $\A$ has Scott rank $1$ now follows from the following fact.
 \begin{claim}
   Every automorphism orbit in $\A$ is existentially definable.
 \end{claim}
 \begin{proof}
   We will show that the points $a_k^n$ have existentially definable automorphism orbits. This implies that all the elements in the cycles connected to them also have existentially definable automorphism orbits. Let $C_m(x)$ say that $x$ has degree larger than 1 and is part of a cycle of length $m$. Then each of the $a_k^n$ with $k\leq \liminf_s g(n,s)$ satisfies $C_{m_k}(a_k^n)$ for some $m_k$ so that none of the $a_j^n$ with $j>\liminf_s g(n,s)$ satisfies $C_{m_k}(a_k^n)$. Then the automorphism orbit of such an $a_j^n$ is defined by 
   \[ R_n(x)\land \exists y_1\dots y_{g(n,s)} \ C_{m_0}(y_1)\land y_1\neq x\land \dots \land C_{m_{g(n,s)}}(y_{g(n,s)}) \land y_{g(n,s)}\neq x. \]
   On the other hand for the $a_k^n$, $k\leq \liminf_s g(n,s)$, let $D_k(x)$ be the existential formula describing the connected component of $a_k^n$ and let $S$ be the set of indices of points such that the connected component of $a_k^n$ properly embeds into the connected component of these points. 
As all connected components of the $a_s^n$ with $s\in S$ contain a cycle present in at most finitely many connected components in the $n$-slice, $S$ is finite and for every $s\in S$, the connected component of $a_s^n$ is finite. 
    Then the automorphism orbit of $a_k^n$ is defined by the formula
   \[ R_n(x)\land D_k(x) \land \bigwedge_{s\in S} \exists x_s D_s(x_s) \land x_s\neq x.\qedhere\]
 \end{proof}
 This completes the proof of~\cref{lem:sr1dgcat''}.
 \end{proof}
\subsection{Jump inversions and the successor case}
To prove the successor case we will combine two jump inversion techniques with a relativized version of \cref{lem:sr1dgcat''}. We will use \cref{lem:sr1dgcat''} relative to $\Delta^0_{2\alpha+1}$, building a $\Delta^0_{2\alpha+1}$-computable structure $\+A$ of Scott rank $1$ so that every isomorphism between two $\Delta^0_{2\alpha+1}$-computable copies must compute every $\Delta^0_{2\alpha+3}$ set. We then use two different jump inversion, or Marker extension, techniques to obtain the successor step of our induction. Both of these techniques use pairs of structures as pioneered by Ash and Knight, culminating in the material in~\cite{ash2000}. For a detailed development of what we call friendly jump inversions, see for example~\cite[Chapter X.3]{montalban2025}. The other technique, called unfriendly jump inversions, was recently developed in~\cite{harrison-trainor2025}. The two techniques work similarly for the most part, so let us introduce the general machinery.

 Given a graph $\+H$ and two structures $\+A,\+B$ in vocabulary $\tau$, one creates a new structure $\+H^*$ in the vocabulary 
   $\{V/1, W/3, U/1\}\cup \tau$ satisfying
   \begin{enumerate}
     \item\label{it:vertset} $V^{\+H^*}$ is infinite and coinfinite,
     \item\label{it:partition} $\neg V^{\+H^*}$ is partitioned into infinitely many infinite sets $S_{u,v}$ and $W(u,v,-)=S_{u,v}$ for each $u,v\in V^{\+H^*}$.
     \item\label{it:pair} $U^{\+H^*}$ partitions each $S_{u,v}$ into infinite, coinfinite sets.
   \end{enumerate}
   Then, using $E$ to denote the edge relation, we define the substructures
   \[ S_{u,v}\cap U^{\+H^*}\cong \begin{cases} \+A & u \mathrel{E}^\+H v\\ \+B &\neg u\mathrel{E}^\+H v \end{cases}\text{ and }
      S_{u,v}\cap \neg U^{\+H^*}\cong \begin{cases} \+B & u \mathrel{E}^\+H v\\
     \+A & \neg u\mathrel{E}^\+H v \end{cases},\]
     i.e., the substructure on $W(u,v,-)$ is either isomorphic to the pair of structures $(\A,\B)$ or $(\B,\A)$ depending of whether there was an edge between $u$ and $v$ in $\+H$. From the fact that graphs are universal for effective bi-interpretability we can assume without loss of generality that $\+H^*$ is a graph, allowing us to iterate this process. The following claim is rather obvious from the definition of $\+H^*$.
     \begin{lemma}
       Suppose that $\+A$ and $\+B$ are such that every isomorphism between $\+A$ and $\+B$ computes every $\Delta^0_\alpha$ set for some computable ordinal $\alpha$. Then so does every isomorphism between $\+A^*$ and $\+B^*$.
     \end{lemma}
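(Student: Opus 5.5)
The idea is to recover an isomorphism $\A\to\B$ from any isomorphism $\A^*\to\B^*$, uniformly and computably. Throughout, $\A^*$ and $\B^*$ are built from the same graph $\+H$ (say the complete graph or any fixed computable graph) using the same pair $(\A,\B)$ in each position. On reflection, though, the statement concerns the case where $\A^*$ and $\B^*$ arise from two \emph{graphs} $\+H_1\cong\+H_2$. I will read the lemma as follows: $\+G^*$ and $\+H^*$ are built from isomorphic graphs $\+G,\+H$ via the same pair $(\A,\B)$, and every isomorphism $\+G\to\+H$ computes every $\Delta^0_\alpha$ set; then so does every isomorphism $\+G^*\to\+H^*$. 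The proof is a reduction: from an isomorphism $F\colon\+G^*\to\+H^*$ we compute, with no extra oracle, an isomorphism $f\colon\+G\to\+H$. Any $\Delta^0_\alpha$ set is computable from $f$, hence from $F$.

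First, since $V$ is a symbol of the vocabulary, $F$ maps $V^{\+G^*}$ onto $V^{\+H^*}$. Both vertex sets are computable subsets of the (computable) domains and come with computable enumerations identifying them with the vertex sets of $\+G$ and $\+H$. So define $f=F\restrict V^{\+G^*}$, transported along these identifications; $f$ is computable from $F$. It is a bijection $V^{\+G}\to V^{\+H}$ because $F$ is a bijection preserving $V$ and $\neg V$.

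Second, I check that $f$ preserves edges. Since $F$ preserves $W$, it maps $S_{u,v}=W(u,v,-)$ onto $S_{f(u),f(v)}$. Since it preserves $U$ and $\tau$, it carries the substructure $S_{u,v}\cap U$ isomorphically onto $S_{f(u),f(v)}\cap U$. Suppose $u\mathrel E v$ in $\+G$ but $f(u),f(v)$ are not adjacent in $\+H$. Then $F$ restricts to an isomorphism between a copy of $\A$ and a copy of $\B$, so $\A\cong\B$. I need this to be impossible. Here is the one real point: in the intended use, $\A\not\cong\B$ (they are the two members of an Ash--Knight pair, distinguished by some infinitary sentence). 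So I add the hypothesis $\A\not\cong\B$, which is implicit in the construction since otherwise $\+H^*$ would not encode $\+H$. Under it, $F$ preserving $U$ forces edge-preservation in both directions, so $f$ is a graph isomorphism.

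I expect the main obstacle to be interpretive rather than technical. One must pin down that the statement is about structures built from isomorphic graphs with a fixed non-isomorphic pair, and that the coding $u,v\mapsto S_{u,v}$ is computable in both presentations, so that $W$-preservation can be read off by $F$. The graph universality used to make $\+H^*$ a graph is a bi-interpretation that is effective and uniform. So if one wants the statement for the graph versions, one composes with the computable translation of isomorphisms, which again costs no oracle. Hence the degree of $F$ bounds that of some isomorphism of the original structures, and the lemma follows.
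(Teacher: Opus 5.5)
Your proposal is correct and follows the paper's proof: restrict an isomorphism $\mathcal{G}^*\to\mathcal{H}^*$ to the $V$-parts and pull it back along the computable bijections between the underlying graphs and $V^{\mathcal{G}^*}$, $V^{\mathcal{H}^*}$, so the starred isomorphism computes an isomorphism of the original structures at no extra cost. The paper leaves the edge-preservation check implicit, and you supply it correctly from preservation of $W$ and $U$ together with the tacit assumption $\A\not\cong\B$ for the pair. You should also delete your opening sentence about building both structures from the same graph, since you retract that reading immediately afterwards.
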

     \begin{proof}
       Note that there are computable bijections $g: \+A\to V^{\+A^*}$ and $h:\+B\to V^{\+B^*}$. Thus, for an isomorphism $f^*:\+A^*\to\+B^*$, $f=h^{-1}\circ f^*\circ g\leq_T f^*$ is an isomorphism between $\+A$ and $\+B$ and hence $f^*$ computes every $\Delta^0_\alpha$ set given that $f$ does.
     \end{proof}
    It was shown in~\cite[Lemma 5.1]{goncharov2005} that if two structures $\+S_0$ and $\+S_1$ are $\alpha$-friendly (i.e., for $\beta\leq \alpha$, their asymmetric $\beta$-back-and-forth relations are uniformly c.e.) and satisfy the same $\Pinf{\alpha}$ sentences, but for each there is a $\Picom{\alpha+1}$ sentence satisfied by it but not the other, then for any $\Delta^0_{\alpha+1}$ set $S$, there is a uniformly computable sequence $(\+C_i)$ so that
       \[ \+C_i\cong \begin{cases} \+S_0 & i\in S\\
                                   \+S_1 & i\not\in S
   \end{cases}.\]
   Using such $\+S_0$ and $\+S_1$ as $\+A$ and $\+B$ gives rise to what we will refer to as \define{$\alpha$-friendly jump inversions}.
   It is straightforward to see from Ash's analysis of the back-and-forth relations on well-orderings~\cite{ash1986} that $\omega^\alpha2 \equiv_{2\alpha} \omega^\alpha$, but $\omega^\alpha2<_{2\alpha+1}\omega^\alpha $. In particular, $\omega^\alpha$ has a $\Pinf{2\alpha+1}$ Scott sentence and $\omega^\alpha2$ has a $\dSinf{2\alpha+1}$ Scott sentence. Thus, the pair $\omega^\alpha$, $\omega^\alpha2$ satisfies the conditions for $2\alpha$-friendly jump inversions. The following fact is obvious from the fact that if $\+H$ is $\Delta^0_{2\alpha+1}$-computable, then so is its edge relation.
   \begin{lemma}\label{claim:jumpinversion}
     Suppose that $\+H$ is $\Delta^0_{2\alpha+1}$-computable. Then there is a computable copy of $\+H^*$, where $\+H^*$ is the friendly jump inversion using $\+A=\omega^\alpha$ and $\+B=\omega^\alpha2$.
   \end{lemma}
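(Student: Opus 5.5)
The plan is to reduce the lemma to the uniform Goncharov--Harizanov--Knight--McCoy--Miller--Solomon result quoted above (\cite[Lemma 5.1]{goncharov2005}), applied with $\+S_0=\omega^\alpha$ and $\+S_1=\omega^\alpha2$ and with $2\alpha$ in the role of the friendliness level.

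\emph{Step 1: the hypotheses of the quoted lemma hold.} Well-orderings of the form $\omega^\alpha$ and $\omega^\alpha 2$ for computable $\alpha$ have computable copies. By Ash's analysis~\cite{ash1986}, their back-and-forth relations are uniformly computable at every level $\beta\le 2\alpha$, so the pair is $2\alpha$-friendly. The same analysis gives $\omega^\alpha2\equiv_{2\alpha}\omega^\alpha$, so the two structures satisfy the same $\Pinf{2\alpha}$ sentences. The sentences separating them at level $2\alpha+1$ can be taken computable, because Ash's back-and-forth analysis is effective for these orders. This is exactly the situation recorded just before the statement.

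\emph{Step 2: form the $\Delta^0_{2\alpha+1}$ set.} Fix a $\Delta^0_{2\alpha+1}$-computable presentation of $\+H$ with universe $\nat$. The set
\[S=\{\langle u,v\rangle : u\mathrel{E}^{\+H} v\}\]
is then $\Delta^0_{2\alpha+1}$. The quoted lemma yields a uniformly computable sequence $(\+C_i)$ with $\+C_i\cong\omega^\alpha$ if $i\in S$ and $\+C_i\cong\omega^\alpha2$ otherwise. Applying the lemma to the complement of $S$, which is also $\Delta^0_{2\alpha+1}$, gives a second uniformly computable sequence $(\+D_i)$ with the roles of $\omega^\alpha$ and $\omega^\alpha2$ swapped.

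\emph{Step 3: assemble the computable copy of $\+H^*$.} The universe splits computably into three parts:
\begin{itemize}
\item $V$, a computable infinite coinfinite set identified with $\nat$;
\item the sets $S_{u,v}$, obtained by partitioning the complement of $V$ computably into infinitely many infinite pieces indexed by pairs;
\item within each $S_{u,v}$, the two halves $U$ and $\neg U$, again obtained by a computable splitting.
\end{itemize}
Define $W$ accordingly. Put a copy of $\+C_{\langle u,v\rangle}$ on $S_{u,v}\cap U$ and a copy of $\+D_{\langle u,v\rangle}$ on $S_{u,v}\cap\neg U$. Because both sequences are uniform, all relations are computable. By construction the structure on $S_{u,v}$ is the pair $(\omega^\alpha,\omega^\alpha2)$ exactly when $u\mathrel{E}^{\+H} v$, and $(\omega^\alpha2,\omega^\alpha)$ otherwise. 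So the result is isomorphic to $\+H^*$. Passing to the graph version through the effective bi-interpretation preserves computability.

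\emph{Main obstacle.} The only point needing care is Step 1. One must confirm that the friendliness and separating-sentence conditions hold exactly at level $2\alpha$ and not at a shifted level. This matters in particular for finite $\alpha$, where Ash's level bookkeeping differs slightly from the infinite case; the thing to check is that $\omega^\alpha$ and $\omega^\alpha2$ are $\equiv_{2\alpha}$ but separated at level $2\alpha+1$. Once that is established, the rest is routine uniform bookkeeping.
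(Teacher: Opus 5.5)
Your proposal is correct and is essentially the paper's argument: the paper treats the lemma as immediate from the quoted Goncharov et al.\ lemma, applied to the $\Delta^0_{2\alpha+1}$ edge relation of $\mathcal{H}$ with the $2\alpha$-friendly pair $\omega^\alpha\equiv_{2\alpha}\omega^\alpha2$. Your Steps 2 and 3, which use $S$ and its complement for the $U$ and $\neg U$ halves, spell out the assembly the paper leaves implicit, and the finite/infinite worry in your last paragraph is not a real obstacle, since Ash's analysis is uniform once $2\alpha$ is read as the ordinal product $2\cdot\alpha$.
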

   We are now ready to prove the successor case of the hard direction of \cref{thm:highforsr}.
 \begin{lemma}\label{lem:highforsrsucc}
   For $\alpha$ a computable successor ordinal, a degree $\*d$ is high for Scott rank $\alpha$ if $\*d$ computes every $\Delta^0_{2\alpha+1}$ set.
 \end{lemma}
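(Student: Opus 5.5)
Read literally, the statement is a special case of the easy direction already proved. The lemma preceding the base case shows that for every $0<\alpha<\ock$, a degree computing every $\Delta^0_{2\alpha+1}$ set is high for Scott rank $\alpha$. A computable successor ordinal is one such $\alpha$. So the plan is simply to invoke that lemma and to check that its proof gives the required complexity bound in the successor case. The remainder of the section, with the friendly and unfriendly jump inversions and the relativized \cref{lem:sr1dgcat''}, is meant for the converse, so I would not use it here.

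To make the successor case explicit, fix a computable $\A$ of Scott rank $\alpha=\gamma+1$ and a computable $\B\cong\A$. Set
\[ P=\{(\bar a,\bar b)\colon (\A,\bar a)\leq_\alpha(\B,\bar b)\}. \]
First, $P$ is a back-and-forth set. Every orbit in $\A$ is $\Sinf{\alpha}$-definable, and $\B$ is an isomorphic copy, so $(\A,\bar a)\leq_\alpha(\B,\bar b)$ implies that $\bar b$ satisfies the $\Sinf\alpha$ orbit formula of $\bar a$. Hence $\bar a$ and $\bar b$ lie in corresponding orbits. It follows that $P$ has the extension property in both directions and contains $(\emptyset,\emptyset)$.

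Second, I would bound the arithmetic complexity of $P$ by induction on the uniformly defined back-and-forth relations between computable structures. The successor clause says that $(\A,\bar a)\leq_{\delta+1}(\B,\bar b)$ if and only if for every $\bar d\in\B$ there is $\bar c\in\A$ with $(\B,\bar b\bar d)\leq_\delta(\A,\bar a\bar c)$. This is a $\forall\exists$ prefix over a relation of the previous level. At limits one takes a conjunction over smaller levels, as in the earlier lemma's limit step. This induction gives that $\leq_\alpha$ is $\Pi^0_{2\alpha}$ for finite $\alpha$, and $\Pi^0_{2\alpha+1}$ at worst for infinite successor $\alpha$. In either case $P$ is $\Delta^0_{2\alpha+1}$, so $P\leq_T\mathbf d$.

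Finally, with $P$ as an oracle, a standard back-and-forth argument alternately extends a finite partial map using the extension property of $P$, searching effectively in $P$ for each witness. This yields an isomorphism $\B\to\A$ computable in $\mathbf d$.

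The only point needing care is the bookkeeping of the complexity bound at the transition from finite to infinite levels. There the indexing shifts by one, which is why the paper writes $\Delta^0_{2\alpha+1}$ uniformly. I expect this to be routine, since it is exactly the induction cited in the earlier lemma.
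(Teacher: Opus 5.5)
Read literally, the statement is the earlier easy-direction lemma restricted to successor $\alpha$, and what you give is that lemma's proof. But this is not what the paper proves here, because the printed ``if'' is reversed. The paper introduces this lemma with ``We are now ready to prove the successor case of the hard direction of \cref{thm:highforsr}.'' Its proof shows that for successor $\alpha$ there are computable $\G\cong\B$ of Scott rank $\alpha$ such that every isomorphism between them computes a complete $\Delta^0_{2\alpha+1}$ set. Hence every degree that is high for Scott rank $\alpha$ computes every $\Delta^0_{2\alpha+1}$ set. Your proposal explicitly sets the jump-inversion machinery aside, so it does not touch the lemma's actual content.

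For $\alpha=1$ the intended statement is \cref{lem:sr1dgcat''} itself. For $\alpha=\lambda+n+1$, the paper proceeds as follows:
\begin{enumerate}
\item It relativizes the construction of \cref{lem:sr1dgcat''} to a complete $\Delta^0_{2\lambda+2n+1}$ set, so that isomorphisms compute every $\Delta^0_{2\lambda+2n+3}=\Delta^0_{2\alpha+1}$ set.
\item It removes the $2n$ finite jumps with the unfriendly jump inversions of \cite{harrison-trainor2025}, which cost only one level of Scott rank per two jumps.
\item It removes the remaining $2\lambda=\lambda$ jumps with a friendly jump inversion along $(\omega^\lambda,\omega^\lambda 2)$ (\cref{claim:jumpinversion}).
\end{enumerate}
The lower bound survives each inversion because an isomorphism $\A^*\to\B^*$ computes one $\A\to\B$. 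The Scott rank is controlled by the explicit Scott sentence $\Theta\land\Gamma^*$. The unfriendly inversions are the essential ingredient. Friendly inversions alone cost a level of Scott rank per jump, and that is why only the weaker $\Delta^0_{\alpha+2}$ bound of \cref{thm:highforcompSR} comes out of them.

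Even as a proof of the literal statement, your complexity step fails as written. A $\Pi^0_{2\alpha+1}$ bound on $\leq_\alpha$ yields only $\Delta^0_{2\alpha+2}$, not $\Delta^0_{2\alpha+1}$. In the indexing the paper uses, $\Sigma^0_\beta$ means c.e.\ in $\emptyset^{(\beta)}$ for infinite $\beta$; this is the shift recorded in the footnote to \cref{thm:highforsr}. With that convention nothing is lost at the finite/infinite transition: $\leq_\alpha$ is $\Pi^0_{2\alpha}$ for every $\alpha$. For instance, $\leq_\omega$ is $\Pi^0_\omega$, and $\leq_{\omega+1}$, a $\forall\exists$ over it, is $\Pi^0_{\omega+2}=\Pi^0_{2(\omega+1)}$. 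That is what puts $P$ in $\Delta^0_{2\alpha+1}$.

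A smaller point concerns the orbit formulas. Under the paper's convention, $\leq_\alpha$ transfers $\Sinf{\alpha}$ formulas from $(\B,\bar b)$ to $(\A,\bar a)$. So it is $\bar a$ that satisfies the orbit formula of $\bar b$, not conversely. Your conclusion that the two tuples lie in corresponding orbits is unaffected.
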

 \begin{proof}
   Suppose that $\alpha=\lambda+n+1$ where $\lambda$ is a limit ordinal. If $n=\lambda=0$, then the lemma follows from \cref{lem:sr1dgcat''}. For the other cases we will carry out the construction in \cref{lem:sr1dgcat''} relativized to $\Delta^0_{(2\lambda+2n+1)}$, i.e., we produce $\Delta^0_{(2\lambda+2n+1)}$-computable structures $\+G$ and $\B$ with Scott rank $1$ so that every isomorphism between $\+G$ and $\B$ computes a complete $\Delta^0_{(2\lambda+2n+3)}$ set. We then combine friendly and unfriendly jump inversions to obtain isomorphic computable structures $\+G^{(-2\lambda+2n+1)}$ and $\B^{(-2\lambda+2n+1)}$ with Scott rank $\alpha$ so that every isomorphism between them computes a complete $\Delta^0_{2\alpha+1}$ set. 
 
   We will use unfriendly jump inversions to obtain $\Delta^0_{2\lambda+1}=\Delta^0_{\lambda+1}$-computable structures $\+G^{(-2n)}$ and $\B^{(-2n)}$ of Scott rank $n$ while ensuring that every isomorphism between them still computes a complete $\Delta^0_{2\alpha+1}$ set. The structures $\+G^{(-2n)}$ and $\B^{(-2n)}$ are obtained from $\+G$ and $\B$ using the techniques outlined above using so-called unfriendly structures. Properties of this jump inversion are summarized in~\cite[Theorem 4.3]{harrison-trainor2025}. In particular, we obtain the following:
   \begin{itemize}
   \item $\+G^{(-2n)}$ and $\B^{(-2n)}$ are $\Delta^0_{2\lambda+1}$-computable
     (\cite[Theorem 4.3 Item 1]{harrison-trainor2025} as $\G$ and $\B$ are $\Delta_{2\lambda+2n+1}$-computable).
   \item $\+G^{(-2n)}$ and $\B^{(-2n)}$ have Scott rank $n$
     (\cite[Theorem 4.3 Item 4]{harrison-trainor2025} as $\G$ and $\B$ have Scott rank $1$, i.e., $\Pi^0_2$ Scott sentences).
   \end{itemize}
   Assuming without loss of generality that they are graphs, we then jump invert $\+G^{-2n}$ and $\B^{-2n}$ using friendly jump inversions with the pairs $(\omega^\alpha,\omega^\alpha2)$, $(\omega^\alpha2,\omega^\alpha)$. That this yields computable structures $\+G^{-\lambda-2n}$ and $\+B^{-\lambda-2n}$ follows from \cref{claim:jumpinversion}. It only remains to show that they have the required Scott rank.

  Let $\theta$ be the sentence formally axiomatizing \cref{it:vertset,it:partition,it:pair} of our jump inversion definition, which has finite quantifier complexity. Then, letting $\phi$ be the $\Pinf{\lambda+1}$ Scott sentence for $\omega^\lambda$ and $\psi$ the $\dSinf{\lambda+1}$ Scott sentence for $\omega^\lambda2$, we can relativize quantification in these sentences to $W(x,y,-)\cap U$ and $W(x,y,-)\cap \neg U$ to obtain formulas $\phi^+(x,y)$, $\psi^+(x,y)$ and $\phi^-(x,y)$, $\psi^-(x,y)$, respectively. Then
  \[ \Theta=\theta\land  \forall x\forall y (\phi^+(x,y) \land \psi^-(x,y)) \lor (\phi^-(x,y)\land \psi^+(x,y))\]
  is $\Pinf{\lambda+2}$ and expresses that any structure satisfying it needs to be a jump inversion of the desired form. At last, take a $\Pinf{n+1}$ Scott sentence $\Gamma$ in prenex normal form for $\+G^{-2n}$. Let $\Gamma^*$ be obtained by relativizing quantifiers to $V$ and replacing basic formulas of the form $x\mathrel{E}y$ with the $\Pinf{\lambda+1}$-formula $\phi^+(x,y)$ or $\Sinf{\lambda+1}$-formula $\neg \phi^-(x,y)$, and formulas of the form $\neg x\mathrel{E}y$ with the formulas $\phi^-(x,y)$ or $\neg\phi^+(x,y)$ so that $\Gamma^*$ is $\Pinf{\lambda+n+1}$. Then the formula $\Theta\land \Gamma^*$ is a $\Pinf{\lambda+n+1}$ Scott sentence for $\+G^{-\lambda-2n}$ showing that $\+G^{-\lambda-2n}$ has Scott rank $\lambda+n=\alpha$ as required.
 \end{proof}
 \subsection{The limit case}
 For limit ordinals $\alpha$, structures with Scott rank $\alpha$ may have either Scott sentence complexity $\Pinf{\alpha}$ or $\Pinf{\alpha+1}$. In \cref{thm:limithighforsr} below we produce a computable structure of Scott sentence complexity $\Pinf{\alpha}$ so that every degree computing isomorphisms between all computable copies must compute every $\Delta^0_{\alpha+1}$ set. This is, colloquially speaking, the optimal result as structures with $\Pinf{\alpha}$ Scott sentence complexity are simpler than structures with $\Pinf{\alpha+1}$ Scott sentence complexity. In order to establish the Scott sentence complexity we use the following internal characterization given in~\cite{gonzalez2024}.
 \begin{lemma}[{\cite[Corollary 2.11]{gonzalez2024}}]\label{lem:unstableseq}
   A structure $\+A$ with Scott rank $\alpha$ has a $\Pinf{\alpha}$ Scott sentence if for every fundamental sequence $(\beta_i)_{i\in\nat}$ for $\alpha$ and complete $\beta_i$-type $p_i(\bar x)$ with $|x|=m$ for some $m$ and $p_i\subset p_{i+1}$, every type $p_i$ being realized in $\+A$ implies that there is a tuple $\bar a$ realizing every type $p_i$.
 \end{lemma}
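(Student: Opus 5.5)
The plan is to show that the $\Pinf{\alpha}$ sentence
\[ \Phi \;=\; \bigwedge_{\beta<\alpha}\ \bigwedge\{\sigma : \sigma \text{ a } \Pinf{\beta} \text{ sentence with } \A\models\sigma\} \]
is a Scott sentence for $\A$. Here $\alpha$ is a limit, so $\Phi$ is indeed $\Pinf{\alpha}$. So fix a countable $\B\models\Phi$; then $\A\equiv_\beta\B$ for every $\beta<\alpha$. Define
\[ \bar a\sim\bar b \iff (\A,\bar a)\equiv_\beta(\B,\bar b)\ \text{for all }\beta<\alpha. \]
Then $\emptyset\sim\emptyset$. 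If $\sim$ has the back-and-forth property in both directions, a standard back-and-forth argument yields $\A\cong\B$. Throughout I use the remark after the definition of Scott rank: every orbit of a tuple $\bar a$ in $\A$ is defined by a $\Sinf{\gamma}$ formula $\psi_{\bar a}$ for some $\gamma<\alpha$.

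\emph{Extending from $\A$ to $\B$.} Let $\bar a\sim\bar b$ and $a'\in\A$, and let $\psi=\psi_{\bar aa'}$ be $\Sinf{\gamma}$. Then $\A\models\exists y\,\psi(\bar a,y)$, and this is a $\Sinf{\gamma}$ fact about $\bar a$, so there is $b'$ with $\B\models\psi(\bar b,b')$. To see that $\bar aa'\sim\bar bb'$, fix $\beta<\alpha$ and let $\chi(\bar x,y)$ be the conjunction of the complete $\beta$-type of $\bar aa'$. Since $\psi$ defines an orbit, every tuple of $\A$ satisfying $\psi$ has that $\beta$-type. Hence $\A\models\forall\bar x\,y\,(\psi\to\chi)$, which is a $\Pinf{\delta}$ sentence for some $\delta<\alpha$ because $\alpha$ is a limit. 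This sentence transfers to $\B$, so $\B\models\chi(\bar b,b')$.

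\emph{Extending from $\B$ to $\A$.} This is the direction where the hypothesis on unstable sequences is needed, and I expect it to be the main obstacle, since we know nothing about orbits in $\B$. Let $\bar a\sim\bar b$ and $b'\in\B$. Fix a fundamental sequence $(\beta_i)$ for $\alpha$ and let $p_i(\bar x,y)$ be the complete $\beta_i$-type of $\bar bb'$ in $\B$; these form an increasing chain. I first check that each $p_i$ is realized in $\A$. We have $(\A,\bar a)\equiv_{\beta_i+1}(\B,\bar b)$, and $\B\models\exists y\,\bigwedge p_i(\bar b,y)$ up to the appropriate $\Sinf{\beta_i+1}$ rendering of the type. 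So some $\bar c d$ in $\A$ realizes $p_i$, in fact with $\bar c=\bar a$. Since all $p_i$ are realized, the hypothesis gives a single tuple $\bar c d\in\A$ realizing every $p_i$. This tuple need not begin with $\bar a$, which is the delicate point. To fix it, note that $\psi_{\bar a}$ is $\Sinf{\gamma}$ with $\gamma<\alpha$ and $\bar b$ satisfies it because $\bar b\sim\bar a$. Hence $\psi_{\bar a}(\bar x)\in p_i$ for $\beta_i\ge\gamma$, so $\bar c$ lies in the orbit of $\bar a$. Applying an automorphism of $\A$ sending $\bar c$ to $\bar a$ produces $a'$ with $\bar aa'$ realizing all $p_i$. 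Since $(\beta_i)$ is cofinal in $\alpha$, this gives $\bar aa'\sim\bar bb'$.

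With both extension properties, $\sim$ is a back-and-forth relation containing $(\emptyset,\emptyset)$, so $\B\cong\A$ and $\Phi$ is a $\Pinf{\alpha}$ Scott sentence. The remaining care points are bookkeeping. First, the types $p_i$ must be taken as sets of infinitary formulas so that they make sense in both structures. Second, realizing a complete $\beta$-type is expressible by a $\Pinf{\beta+1}$ condition, which stays below $\alpha$.
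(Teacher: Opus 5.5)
Your proof is correct. The paper gives no proof to compare it with: the lemma is quoted from \cite[Corollary 2.11]{gonzalez2024} without argument, so yours is a self-contained substitute for the stated direction.

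Since $\alpha$ is a limit, having a $\Pinf{\alpha}$ Scott sentence amounts to the $\Pinf{<\alpha}$-theory of $\A$ characterizing $\A$ among countable structures. You verify this by showing that agreement on all $\beta$-types for $\beta<\alpha$ is a back-and-forth relation between $\A$ and any countable model $\B$ of that theory.

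\begin{itemize}
\item The forth step uses only the Remark that every orbit is $\Sinf{\gamma}$-definable for some $\gamma<\alpha$.
\item The hypothesis on chains of types is used exactly once, in the back step, to merge the witnesses for the individual $p_i$ into a single tuple.
\item Using the orbit formula of $\bar a$ and an automorphism to move the merged realizer $\bar c d$ to a tuple extending $\bar a$ handles correctly the fact that the realizer need not extend $\bar a$.
\end{itemize}

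Citing keeps the paper short; your version shows which ingredient is used where. It also shows that the hypothesis is needed only for a single fundamental sequence.

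A write-up needs two bookkeeping repairs.

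First, $\Phi$ and $\chi$ as you wrote them are conjunctions over continuum many infinitary formulas, so they are not $L_{\omega_1\omega}$ formulas. Replace them with the canonical back-and-forth formulas.
\begin{itemize}
\item For each $\beta<\alpha$ there is a single $\Pinf{\beta}$ sentence $\varphi_\beta$ such that, for countable $\B$, $\B\models\varphi_\beta$ iff $\A\le_\beta\B$. Take $\Phi=\bigwedge_{\beta<\alpha}\varphi_\beta$.
\item Likewise, the complete $\beta$-type of a tuple of a countable structure is isolated among countable structures by a single $\Pinf{\beta+1}$ formula. Build it by conjoining the canonical $\Pinf{\beta}$ formula for one inequality with a countable conjunction of $\Sinf{\gamma+1}$ formulas ($\gamma<\beta$) for the reverse inequality.
\end{itemize}

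Second, $\chi_i$ has complexity $\Pinf{\beta_i+1}$, so $\exists y\,\chi_i(\bar x,y)$ is $\Sinf{\beta_i+2}$. You should therefore invoke $(\A,\bar a)\equiv_{\beta_i+2}(\B,\bar b)$ rather than $\equiv_{\beta_i+1}$.

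Neither point affects the argument, because $\alpha$ is a limit and all these levels stay below it.
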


 \begin{lemma}\label{thm:limithighforsr}For $\alpha$ a computable limit ordinal, there is a computable structure of Scott rank $\alpha$, in particular of Scott sentence complexity $\Pinf{\alpha}$, so that if $\*d$ computes an isomorphism between all its computable copies, then $\*d$ computes every $\Delta^0_{2\alpha+1}$ set.
 \end{lemma}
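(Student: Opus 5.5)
Since $\alpha$ is a limit, $2\alpha+1=\alpha+1$, so we must build one computable structure $\+A$ of Scott rank $\alpha$ with a $\Pinf{\alpha}$ Scott sentence, whose isomorphisms between computable copies compute $\emptyset^{(\alpha)}$, i.e.\ every $\Delta^0_{\alpha+1}$ set. The plan is to glue the successor-case structures along a fundamental sequence. Fix a computable fundamental sequence $(\beta_i)_{i\in\nat}$ for $\alpha$ of successor ordinals. By the proof of \cref{lem:highforsrsucc}, which is uniform in the ordinal notation, we obtain uniformly computable pairs $\+G_i\cong\B_i$ of Scott rank $\beta_i$ such that every isomorphism $\+G_i\to\B_i$ computes a complete $\Delta^0_{2\beta_i+1}$ set, hence $\emptyset^{(\beta_i)}$ uniformly in $i$. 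Let $\+A=\+G$ be the disjoint union of the $\+G_i$, with each piece marked by a unary predicate $P_i$, and let $\B$ be the analogous disjoint union of the $\B_i$.

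Both are computable. An isomorphism $f\colon\+G\to\B$ restricts uniformly to isomorphisms $\+G_i\to\B_i$, so $f$ computes $\emptyset^{(\beta_i)}$ uniformly in $i$. Since $\emptyset^{(\alpha)}$ is, for limit $\alpha$, the effective join of the $\emptyset^{(\beta_i)}$ along the fundamental sequence, $f$ computes $\emptyset^{(\alpha)}$ and hence every $\Delta^0_{\alpha+1}$ set. One checks that the uniformity in the jump-inversion construction really gives a uniform reduction; this is routine but needs care with the indices of the relativized \cref{lem:sr1dgcat''} construction.

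For the Scott rank, any tuple $\bar a$ lies in finitely many pieces $P_{i_1},\dots,P_{i_k}$. Its orbit is defined by the conjunction of the $P_{i_j}$-relativized $\Sinf{\beta_{i_j}}$ orbit formulas, which is $\Sinf{\beta}$ for some $\beta<\alpha$. Hence the Scott rank is at most $\alpha$. It is at least $\alpha$ because the pieces have Scott ranks $\beta_i$ cofinal in $\alpha$. Relativizing each piece's orbit formula to $P_i$ is harmless, since $P_i$ is atomic.

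The $\Pinf{\alpha}$ Scott sentence is the step I expect to be the main obstacle. The direct approach is to take the conjunction, over $i$, of the $P_i$-relativized $\Pinf{\beta_i+1}$ Scott sentences of the $\+G_i$, together with $\forall x\bigvee_i P_i(x)$. This is a countable conjunction of $\Pinf{<\alpha}$ sentences, hence $\Pinf{\alpha}$. If one prefers the internal check, verify the criterion of \cref{lem:unstableseq}. Suppose we have an increasing chain of realized complete $\beta_i$-types of $m$-tuples. Each type fixes which predicates $P_j$ the coordinates lie in, so all realizations stay inside a fixed finite set of pieces. Inside a single piece of Scott rank below $\alpha$, a sufficiently high type in the chain already isolates the orbit, so one tuple realizes all the $p_i$. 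The remaining subtlety is the quantifier $\forall x\bigvee_i P_i(x)$, which is $\Pinf{2}$ and therefore fine. If the disjoint-union sentence somehow failed, I would instead use Montalbán's characterization that a $\Pinf{\alpha}$ Scott sentence exists if each orbit is $\Sinf{<\alpha}$-definable and the structure is determined piecewise.
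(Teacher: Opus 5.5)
There is a genuine gap, and it is in the degree you aim for. In this paper's (Ash--Knight) convention, for limit $\alpha$ a complete $\Delta^0_{2\alpha+1}=\Delta^0_{\alpha+1}$ set is $\emptyset^{(\alpha+1)}$. The set $\emptyset^{(\alpha)}$ is only $\Delta^0_{\alpha}$-complete. Two places in the paper fix this convention. The footnote to \cref{thm:highforsr} asks for $0^{(2\alpha+1)}$ when $\alpha\geq\omega$. In the proof of the easy direction, for limit $\alpha$ the relation $\leq_\alpha$ is $\Pi^0_\alpha$, i.e.\ co-c.e.\ in $\emptyset^{(\alpha)}$, so one more jump is needed to decide it. The content of the lemma is that this extra jump is necessary.

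Your disjoint union cannot show that, because it is $\mathbf{0}^{(\alpha)}$-computably categorical. Take any two computable copies. Their $P_i$-parts are uniformly computable copies of $\G_i$, and all orbits of $\G_i$ are $\Sinf{\beta_i}$-definable. So matching tuples piecewise via $\leq_{\beta_i}$ gives a back-and-forth set. These relations are uniformly $\Pi^0_{2\beta_i}$ with $2\beta_i+1<\alpha$, hence uniformly computable from $\emptyset^{(\alpha)}$. The usual back-and-forth argument then produces an $\emptyset^{(\alpha)}$-computable isomorphism. So the bound $\emptyset^{(\alpha)}$ you prove is already optimal for your structure, and no amount of care with uniformity will raise it.

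The missing idea is non-uniformity: the level $\beta<\alpha$ at which an orbit becomes definable must not be computable from $\emptyset^{(\alpha)}$. The paper achieves this by running the construction of \cref{lem:sr1dgcat''} one level up, relative to $\emptyset^{(\alpha)}$. Fix a modulus $m$ of $\emptyset^{(\alpha+1)}$ that is limitwise monotonic in $\emptyset^{(\alpha)}$ via an approximation $l$. In each $n$-slice, put sequences $(\mc{D}_{i,j})_{j\in\nat}$. Set $\mc{D}_{i,j}\cong\omega^{\beta_j}$ if an $\emptyset^{(\beta_j)}$-computation shows $l(n,s)\geq i$ for some $s$, and $\mc{D}_{i,j}\cong\omega^{\beta_j}2$ otherwise; this is realized computably via Ash--Knight pairs. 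Add to $\G$ one extra sequence $(\mc{D}_{\infty,j})_j$ with every $\mc{D}_{\infty,j}\cong\omega^{\beta_j}2$. Then $(\mc{D}_{\infty,j})_j\cong(\mc{D}_{i,j})_j$ exactly when $i>m(n)$. Hence every isomorphism computes a function dominating $m$, and therefore computes $\emptyset^{(\alpha+1)}$.

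Your Scott rank and $\Pinf{\alpha}$ verifications are correct for your structure. They are easy precisely because of the uniformity that makes it $\mathbf{0}^{(\alpha)}$-categorical. For the paper's structure, the $\Pinf{\alpha}$ bound has to be obtained through \cref{lem:unstableseq}.
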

 \begin{proof}
   A similar construction producing a structure with the same properties already appeared in~\cite{csima2020}. We provide a sketch here and refer the reader to this article for details. The set $\emptyset^{(\alpha+1)}$ is a complete $\Delta^0_{2\alpha+1}$ set and has a modulus $m$ that is limitwise monotonic in $\emptyset^{(\alpha)}$. We will build structures $\+G$ and $\B$ so that every isomorphism between them computes the modulus. Say $l$ gives the $\emptyset^{(\alpha)}$ approximation to $m$. Fixing a fundamental sequence $(\beta_i)_{i\in \nat}$ for $\alpha$ and taking $\emptyset^{(\alpha)}=\bigoplus_i \emptyset^{(\beta_i)}$, notice that every computation from $\emptyset^{(\alpha)}$ can actually be computed from $\emptyset^{(\beta_i)}$ for some $\beta_i$. 
   Hence, invoking Ash and Knight's pairs of structures theorem~\cite{ash1990} for each $\emptyset^{(\beta_i)}$ we obtain a uniformly computable sequence of structures $\+C_{i,j}$ such that
   \[ \+C_{i,j}\cong\begin{cases} \A_i & j\in \emptyset^{(\beta_i)}\\
   \B_i & j\notin \emptyset^{(\beta_i)}\end{cases}.\]
   The structures will be made up of $n$-slices for every $n$ and each slice will contain infinitely many numbered sequences of structures $(\+D_{i,0},\+D_{i,1},\+D_{i,2}\dots)_i$. In the structure $\+G$ we additionally add a sequence of structures $(\+D_{\infty,0},\dots)_\infty$. In both $\B$ and $\+G$ for every $n$-slice we let
   \begin{enumerate}
     \item $\+D_{i,j}\cong \A_j$ if there is $s$ such that $l(n,s)\geq i$ and the computation is bounded by $\emptyset^{(\beta_j)}$,
   \item $\+D_{i,j}\cong \B_j$ otherwise, i.e., if either the computation needs a stronger oracle or $l(n,s)<i$.
   \end{enumerate}
   Additionally, in $\+G$, $\+D_{\infty,j}\cong \B_j$ for all $j$. Now say that $m(n)=k$, then $(\+D_{\infty,j})\cong \+D_{i,j}$ if and only if $i>k$. Hence, any isomorphism $f:\+G\to \B$ must send $(\+D_{\infty,j})$ to some $(\+D_{i,j})$ with $i>k$. In particular, any such isomorphism can compute the index $i$ uniformly in $n$, thus it computes a function dominating $m$.

   For $\+A_i$ and $\+B_i$ we can use $\omega^{\beta_i}$ and $\omega^{\beta_i}2$, respectively, as in the proof of \cref{lem:highforsrsucc}.
   This guarantees that $\+G$ has Scott rank $\alpha$. As promised above we can in fact establish $\+G$'s Scott sentence complexity.
     \begin{claim} $\+G$ has Scott sentence complexity $\Pinf{\alpha}$.\end{claim}
     \begin{proof}
       We will use \cref{lem:unstableseq}. Suppose that $(p_i)_{i\in\nat}$ is a sequence of types realized in $\+G$ as in the lemma. Then notice that if $\bar a$ and $\bar b$ satisfy any type in this sequence, then for every $k<|\bar a|$, $a_k$ and $b_k$ need to be in the same $n$-slice $n_0$ and in the same column of a sequence in this $n$-slice. As the structures $\omega^{\beta_i}$ and $\omega^{\beta_i}2$ are rigid and have Scott sentence complexity below $\Pinf{\alpha}$, there is $i_0$ such that $p_{i_0}$ specifies the isomorphism type of the columns containing the free variables, and the rank of the elements within the orderings. In particular, $p_{i_0}$ specifies the isomorphism types of the columns to the left of any free variable. If the element in the rightmost column containing a free variable is in a linear ordering isomorphic to $\+B_j$, then $p_{i_0}$ already specifies the automorphism type of the tuple, hence the tuple realizing $p_{i_0}$ realizes all $p_i$. Otherwise, by choosing $i_1>i_0$ large enough so that $p_{i_1}$ specifies the isomorphism type of a column $j$ isomorphic to $\+B_j$, one easily sees that $p_{i_1}$ specifies the automorphism orbit of the tuple realizing it and hence this tuple realizes all types $(p_i)_{i\in\nat}$.
     \end{proof}
     This completes the proof of~\cref{thm:limithighforsr}.
 \end{proof}
 
 \section{Highness for computably defined Scott ranks}\label{sec:cdSr}
 The goal of this section is to prove the following theorem.
 
 \begin{theorem}\label{thm:highforcompSR}
    Suppose $0 < \alpha < \ock$,
\begin{enumerate}
\item If $\alpha$ is a limit ordinal, then $\*d$ is high for computably defined Scott rank $\alpha$ if and only if $\*d$ computes every $\Delta^0_{\alpha+1}$ set.
\item If $\alpha$ is a successor ordinal, then $\*d$ is high for computably defined Scott rank $\alpha$ if and only if $\*d$ computes every $\Delta^0_{\alpha+2}$ set.
\end{enumerate}
\end{theorem}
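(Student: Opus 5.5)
We prove the two directions separately: an upper bound obtained by computing a back-and-forth set, and a lower bound obtained by building hard pairs of structures through jump inversion.

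\emph{Upper bound.} Let $\A$ be computable of computably defined Scott rank $\alpha$ and let $\B\cong\A$ be computable. We use that every orbit is defined by a computable $\Sicom{\beta}$ formula with $\beta\le\alpha$. By the Remark, when $\alpha$ is a limit we may take $\beta<\alpha$. Map each tuple $\bar a\in\A$ to the formula $\phi_{\bar a}$ defining its orbit. Then
\[ \{(\bar a,\bar b)\colon \B\models\phi_{\bar a}(\bar b)\} \]
is a back-and-forth set. To compute it, the oracle must decide satisfaction of $\Sicom{\beta}$ formulas in a computable structure, which is $\Sigma^0_\beta$. The oracle must also find $\phi_{\bar a}$. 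The map $\bar a\mapsto\phi_{\bar a}$ is not given to us, so we have to search for it.

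\emph{The search.} For successor $\alpha$, we look for a computable index $e$ of a $\Sicom{\alpha}$ formula such that $\A\models\phi_e(\bar a)$ and $\A\models\forall\bar x\,(\phi_e(\bar x)\to\phi_e\text{ defines a single orbit})$. Checking that $\phi_e$ picks out exactly the orbit of $\bar a$ is awkward. Instead we require that $\phi_e$ be a formula with $\A\models\phi_e(\bar a)$ such that every tuple satisfying $\phi_e$ is $\le_\alpha$-above $\bar a$. Since $\A$ has Scott rank at most $\alpha$, the relation $\le_\alpha$ coincides with being in the orbit. The relation $\le_\alpha$ on a computable structure is $\Pi^0_{\alpha}$-ish, so the condition on $\phi_e$ is $\Pi^0_{\alpha+1}$. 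Together with the $\Sigma^0_\alpha$ truth condition, the whole condition is $\Delta^0_{\alpha+2}$, so a $\Delta^0_{\alpha+2}$ oracle can search for a valid $e$. It then decides $\B\models\phi_e(\bar b)$, which is $\Sigma^0_\alpha$. Running the standard back-and-forth construction with these answers yields the isomorphism.

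For limit $\alpha$, the same search ranges over $\beta<\alpha$. Each of these conditions is $\Delta^0_{\beta+2}$, so the whole search is uniformly $\Delta^0_{\alpha+1}$ (strictly, $\emptyset^{(\alpha)}$-computable). I expect the index bookkeeping in this direction to need care, particularly making the verification of definability uniform, but it should be routine.

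\emph{Lower bound, limit case.} For limit $\alpha$ I would reuse the construction of \cref{thm:limithighforsr}, but aimed at a complete $\Delta^0_{\alpha+1}$ set. That set is $\emptyset^{(\alpha)}$, whose elements are coded by the $\emptyset^{(\beta_i)}$. Concretely, the structure is a disjoint union of slices. The $i$-th slice encodes $\emptyset^{(\beta_i)}$ as in \cref{lem:highforsrsucc}, using Ash–Knight pairs $\omega^{\gamma},\omega^{\gamma}2$ for suitable $\gamma$ with $2\gamma\approx\beta_i$. In that slice the orbits are computably $\Sigma$-definable at levels $<\alpha$, and every isomorphism between the two copies computes $\emptyset^{(\beta_i)}$ uniformly in $i$. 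Uniformity in $i$ then gives that every isomorphism computes $\emptyset^{(\alpha)}$.

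\emph{Lower bound, successor case.} The main obstacle is the successor case, where we must force $\Delta^0_{\alpha+2}$ while keeping computably defined Scott rank $\alpha$. The construction should exploit the gap between computable and noncomputable definitions. I would take a $\Delta^0_{\alpha+1}$-computable version of the Scott rank $1$ structure of \cref{lem:sr1dgcat''}, relativized one jump lower, and jump invert it by $\alpha$ levels using friendly jump inversions with $\omega^{\gamma}$ pairs. A single jump inversion changes structure complexity by one level while changing degree by one jump, so this should yield computable copies whose isomorphisms compute $\emptyset^{(\alpha+1)}$. The orbits should be $\Sicom{\alpha}$-definable: the existential orbit formulas of \cref{lem:sr1dgcat''} are c.e.\ relative to the oracle, so they become computable after inversion. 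The rank should be exactly $\alpha$. Verifying this rank bound via the translation $\Gamma\mapsto\Gamma^*$ is where I expect the real work to lie.
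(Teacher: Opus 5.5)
Your overall architecture matches the paper's: an upper bound via a back-and-forth set, a reused construction at limits, and the relativized base case with friendly jump inversion at successors. Both halves have real gaps, though.

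\emph{Upper bound.} The complexity count is wrong. On a computable structure, $\le_\alpha$ is in general only $\Pi^0_{2\alpha}$, not ``$\Pi^0_\alpha$-ish''; already $\le_1$ is $\Pi^0_2$. This is exactly where the $2\alpha+1$ in \cref{thm:highforsr} comes from. Evaluated through $\le_\alpha$, your test ``every tuple satisfying $\phi_e$ is $\le_\alpha$-related to $\bar a$'' is $\Pi^0_{2\alpha}$, so your search only reproves the $\Delta^0_{2\alpha+1}$ bound. That equals $\Delta^0_{\alpha+2}$ only for $\alpha=1$ or $\alpha=\lambda+1$, precisely where the two theorems coincide. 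On structures of computably defined Scott rank $\le\alpha$ the test is indeed $\Pi^0_{\alpha+1}$, but only because the orbit relation coincides with ``every $\Sicom{\alpha}$ formula true of $\bar a$ is true of $\bar c$'' (the orbit formula of $\bar a$ is among them). That description, not $\le_\alpha$, is what the oracle must evaluate. With it you need no search for $\phi_{\bar a}$: the set of $(\bar a,\bar b)\in\A\times\B$ satisfying the same $\Sicom{\alpha}$ formulas is already a $\Pi^0_{\alpha+1}$ back-and-forth set. This is what the paper's atoms of $\mathbb{P}^{\A}_{\Sicom{\alpha},n}$ achieve.

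In the limit case, your claim that each condition is $\Delta^0_{\beta+2}$ fails for a different reason. A $\Sicom{\beta}$ formula can isolate the $\Sicom{\beta}$-type of $\bar a$ without defining its orbit, which may only be definable at a higher level below $\alpha$. You must demand isolation at every $\gamma$ with $\beta\le\gamma<\alpha$ (the paper's ``$S$ is an atom for all $j\ge i$''), or equivalently use equality of $\Sicom{<\alpha}$-types. That condition is $\Pi^0_\alpha$, so the search needs the jump of $\bigoplus_i\emptyset^{(\beta_i)}$. Your parenthetical that $\bigoplus_i\emptyset^{(\beta_i)}$ itself suffices is false, as the structure of \cref{thm:limithighforsr} shows.

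\emph{Lower bound.} The limit case is one jump short, and this is a missing idea rather than bookkeeping. In the paper's convention, a complete $\Delta^0_{\alpha+1}$ set for limit $\alpha$ is $\emptyset^{(\alpha+1)}$, the jump of $\emptyset^{(\alpha)}=\bigoplus_i\emptyset^{(\beta_i)}$, not $\emptyset^{(\alpha)}$. Slices coding the $\emptyset^{(\beta_i)}$ one at a time force an isomorphism to compute only $\bigoplus_i\emptyset^{(\beta_i)}$. Indeed, with labelled slices, $\bigoplus_i\emptyset^{(\beta_i)}$ can build isomorphisms slice by slice via the easy direction of \cref{thm:highforsr}, so no such structure witnesses the lower bound.

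The extra jump comes from the modulus trick of \cref{thm:limithighforsr}. There, $\emptyset^{(\alpha+1)}$ has a modulus $m$ that is limitwise monotonic in $\emptyset^{(\alpha)}$. The extra sequence $(\mathcal D_{\infty,j})_j$ in $\G$ is isomorphic to the $i$-th sequence of the $n$-slice exactly when $i>m(n)$, so locating its image yields a function dominating $m$. Since $2\alpha+1=\alpha+1$ for limit $\alpha$, that lemma already gives exactly the needed bound. The paper only checks that its structure has computably defined Scott rank $\alpha$: orbits are fixed by slice, column, a finite part of the sequence's isomorphism type, and ranks inside the ordinals, all computable formulas of level $<\alpha$.

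Your successor-case lower bound follows the paper's route, but the level count needs fixing. Relativize the base construction to a $\Delta^0_\alpha$-complete set and invert by the $\delta$ with $\delta+1=\alpha$, not by $\alpha$ levels, which would give Scott rank $\alpha+1$. Also, $(\omega^\gamma,\omega^\gamma2)$ pairs are $2\gamma$-friendly, so they cover only one parity of $\delta$. For the other parity you need a different pair, such as the successor orderings $(\omega^{\lambda+n+1},s)$, $(\omega^{\lambda+n+1}2,s)$ that the paper uses.
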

\begin{proof}
  \emph{Right-to-left direction of item 2.} First note that the index set of $\Sicom{\alpha}$-formulas is computable (see~\cite[p.~106]{ash2000}) and satisfaction of $\Sicom{\alpha}$-formulas in computable structures is $\Sigma^0_\alpha$~\cite[Theorem 7.5]{ash2000}. For a given computable structure $\+A$ and fixed $n\in\nat$ consider the lattice of $\Sicom{\alpha}$-definable subsets of $\A^n$ ordered by subset, i.e.,
  \[ \mathbb{P}_{\Sicom{\alpha},n}^{{\+A}}=\{ \phi^\+A: \phi(x_1\dots x_n)\in \Sicom{\alpha}\}.\]
  If $\+A$ has Scott rank $\alpha$, then for an n-tuple $\bar a$, the formula $\phi$ defining the automorphism orbit of $\bar a$ is one so that $\phi^{\+A}$ is an atom in $\mathbb{P}_{\Sicom\alpha,n}^{\+A}$, i.e., for all $D\in \mathbb{P}_{\Sicom\alpha,n}^{\+A}$, $D\land \phi^{\+A}=\emptyset$ or $D\land \phi^{\+A}=\phi^{\+A}$. A degree $\*d$ computing every $\Delta^0_{\alpha+2}$ set can compute the atoms in $\mathbb{P}_{\Sicom\alpha,n}^{\+A}$ as well as in the corresponding partial ordering for $\+A$ expanded by constants, and thus, given two computable copies of $\+A$, can compute a back-and-forth set, hence an isomorphism between them.

  \emph{Right-to-left direction of item 1.} Fix a fundamental sequence $(\beta_i)_{i\in\nat}$ for $\alpha$. We will look at the algebras $\mathbb P_{\Sicom{\beta_i},n}^{{\+A}}$ defined as above. To compute the formula defining the automorphism orbit of an n-tuple $\bar a$ we have to find $i$ and $S\in\mathbb P_{\Sicom{\beta_i},n}^{{\+A}}$ so that $\bar a\in S$ and for all $j\geq i$, $S$ is an atom in $S\in\mathbb P_{\Sicom{\beta_i},n}^{{\+A}}$. We claim that any formula defining $S$ defines the automorphism orbit of $\bar a$. Since $\+A$ has computably defined Scott rank $\alpha$, there is $i$ such that the automorphism orbit of $\bar a$ is $\Sicom{\beta_i}$-definable. Hence, for $j\geq i$, if $\phi^{\+A}$ is an atom in $\mathbb P_{\Sicom{\beta_j},n}^{{\+A}}$, then $\phi$ defines $\bar a$'s automorphism orbit. Thus, by definition, any formula defining $S$ must define $\bar a$'s automorphism orbit. Counting quantifiers and using the uniformity of this argument we conclude that for any two computable copies of $\+A$, there is a $\Pi^0_\alpha$ back-and-forth set and thus a $\Delta^0_{\alpha+1}$ computable isomorphism.

  \emph{Left-to-right direction of item 1.} This follows from the fact that the structure produced in \cref{thm:limithighforsr} has computably defined Scott rank. To see this, recall that the automorphism orbit of an element in this structure is defined by (1) its slice, (2) the position of its ordering in its sequence of orderings, (3) the isomorphism type of the sequence it is in, and (4) its rank in its ordering. Items (1)--(2) can be defined using quantifier-free formulas, but for items (3)--(4) we need formulas of higher complexity. However, as the rank of an element in its ordering is definable by a computable infinitary formula and the isomorphism type of the sequence is determined by a finite initial segment, these formulas can be taken to be computable of complexity less than $\Sinf\alpha$. A similar argument works for finite tuples and thus it follows that $\+G$ has computably defined Scott rank $\alpha$, as required.
  
  \emph{Left-to-right direction of item 2.} For the base case note that the structure produced in \cref{lem:sr1dgcat''} of Scott rank $1$ has computably defined Scott rank $1$. In fact, this holds for any structure of Scott rank $1$ as their automorphism orbits are definable by finite existential formulas. For successor $\alpha=\lambda+k$, the proof goes along the same lines as the proof of \cref{lem:highforsrsucc} except that we only use friendly jump inversions. Relativize the base case construction to a $\Delta^0_{\alpha}$-complete set $S$. Then there are $S$-computable structures $\+G$ and $\+B$ of Scott rank $1$, so that any isomorphism between them computes every $\Delta^0_{\alpha+2}$ set. Now we jump invert using linear orderings as follows. If $k=2n$, then we do a friendly jump invert with the pair $\omega^{\lambda+n}$, $\omega^{\lambda+n}2$. By \cref{claim:jumpinversion} we obtain computable structures $\+G^{-\lambda-2n}$ and $\+B^{-\lambda-2n}$; that they have Scott rank $\alpha$ follows from the same argument as in the last paragraph of the proof of \cref{thm:limithighforsr}. On the other hand, if $k=2n+1$, then we use the successor orderings of order-type $\omega^{\lambda+n+1}$ and $\omega^{\lambda+n+1}2$. As linear orderings, $\omega^{\lambda+n+1}\equiv_{\lambda+2n+2} \omega^{\lambda+n+1}2$ and $\omega^{\lambda+n+1}2<_{\lambda+2n+3} \omega^{\lambda+n+1}$, and since successor orderings are the canonical structural jumps of linear orderings~\cite[Chapter 10]{montalban2021}, as successor orderings
\[ (\omega^{\lambda+n+1},s)\equiv_{\lambda+2n+1} (\omega^{\lambda+n+1}2,s) \text{ and }(\omega^{\lambda+n+1}2,s)<_{\lambda+2n+2} (\omega^{\lambda+n+1},s)\]
  so that jump inverting with $(\omega^{\lambda+n+1},s)$, $(\omega^{\lambda+n+1}2,s)$ yields the required computable structures $\+G^{-\lambda-2n-1}$ and $\+B^{-\lambda-2n-1}$. That they have Scott rank $\alpha$ again follows from the argument in the last paragraph of \cref{lem:highforsrsucc}.  
\end{proof}

\subsection{A coincidence of theorems}

Note that if $\alpha$ is a limit, then $2\alpha+1 = \alpha+1$.  Further, if $\alpha$ is 1 or the successor of a limit, then $2\alpha+1 = \alpha+2$.  Thus for either of these cases, \cref{thm:highforsr,thm:highforcompSR} coincide: a degree $\*d$ is high for Scott rank $\alpha$ if and only if $\*d$ is high for computably defined Scott rank $\alpha$.  This is explained by \cref{thm:limitiffcompdefined}, which shows that for such an $\alpha$, a computable structure has Scott rank $\alpha$ if and only if it has computably defined Scott rank $\alpha$.

\section{Index sets and coincidence at limits}\label{sec:index}
In this section we calculate the complexity of the index sets of computable structures with Scott rank $\alpha$ and computably defined Scott rank $\alpha$ (\cref{thm:indexsets}). It is perhaps surprising that these complexities coincide for all computable ordinals. Of its own interest, and central in explaining some of the other results in the paper, is \cref{thm:limitiffcompdefined}, which shows that for limit ordinals every structure with Scott rank at that level has computably defined Scott rank. We prove these results in \cref{subsec:indexsetcalculations} after we prove a modification of Ash and Knight's pairs of structures theorem, which we will need to show that our lower bounds are tight.
  \subsection{Limits of structures}\label{subsec:limits}
Our main tool in this section are $\alpha$-systems as developed by Ash and Knight, see~\cite{ash2000} for a detailed discussion. For computable $\alpha$, an $\alpha$-system is a structure
\[ (L,U,\hat l, P,E, (\leq_\beta)_{\beta<\alpha})\]
having the following properties. $L$ and $U$ are c.e.\ sets and $P$ is a c.e. \emph{alternating tree} on $L$ and $U$ starting at $\hat l\in L$. That means that elements of $P$ are of the form $\sigma=\hat lu_0l_0u_1l_1,\dots$ where $u_i\in U$ and $l_i\in L$. We assume that every tree $P$ satisfies that every element of it has a proper extension in $P$. $E$ is a partial computable function from $L$ to the set of finite subsets of the natural numbers and the $\leq_\beta$ are binary relations on $L$, c.e.\ uniformly in $\beta$. Now for this structure to be an $\alpha$-system it additionally has to satisfy the following properties:
\begin{enumerate}
    \item\label{it:asys1}$\leq_\beta$ is reflexive and transitive, for $\beta<\alpha$,
    \item\label{it:asys2}$\leq_\beta$ is \emph{nested}, i.e.\ , $l\leq_\gamma l'\Ra l\leq_\beta l'$ for $\beta<\gamma<\alpha$,
    \item\label{it:asys3} if $l\leq_0 l'$, then $E(l) \subseteq E(l')$,
    \item\label{it:asys4} if $\sigma u\in P$, where $\sigma$ ends in $l_0\in L$, and
    \[ l_0\leq_{\beta_0} l_1\leq_{\beta_1}\dots\leq_{\beta_{k-1}} l_k,\]
    for $\alpha>\beta_0>\beta_1>\dots >\beta_k$, then there exists $l^*$ such that $\sigma ul^*\in P$, and $l_i\leq_{\beta_i} l^*$, for all $i\leq k$.
\end{enumerate}
An \emph{instruction function} $q$ is a function from sets of sequences in $P$ with last terms in $L$ to $U$ such that $q(\sigma)=u$ implies $\sigma u\in P$. A \emph{run} of $(P,q)$ is a path $\pi=\hat l u_1l_1u_2l_2\dots$ through $P$ such that for all $n\in \omega$
\[ u_{n+1}=q(\hat lu_1l_1\dots u_nl_n).\]
Ash proved the following powerful theorem, called the metatheorem for $\alpha$-systems. We state it here but refer the reader to~\cite{ash2000} for a proof.
\begin{theorem}[{\cite[Theorem 14.1]{ash2000}}]
Let $(L,U,\hat l, P,E, (\leq_\beta)_{\beta<\alpha})$ be an $\alpha$-system. Then for any $\Delta_\alpha^0$ instruction function $q$, there is a run $\pi$ of $(P,q)$ such that $E(\pi)$ is c.e., while $\pi$ itself is $\Delta_\alpha^0$. Moreover, from a $\Delta_\alpha^0$ index for $q$, together with a computable sequence of c.e.\ and computable indices for the components of the $\alpha$-system, we can effectively determine a $\Delta_\alpha^0$ index for $\pi$ and a c.e.\ index for $E(\pi)$.
\end{theorem}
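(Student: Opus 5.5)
The plan is to follow Ash's ``workers'' argument, organized as a computable construction that manages simultaneous guesses at every level $\beta<\alpha$. First, I fix uniformly computable approximations to $\Delta^0_\alpha$. For each $\beta<\alpha$ (using a fixed notation for $\alpha$) I take an approximation to $\emptyset^{(\beta)}$ so that, from a $\Delta^0_\alpha$ index for $q$, each value $q(\sigma)$ is eventually determined correctly by some level $\gamma<\alpha$. Concretely, for $\alpha=\gamma+1$ the value $q(\sigma)$ is $\emptyset^{(\gamma)}$-computable. For limit $\alpha$ it is $\emptyset^{(\beta)}$-computable for some $\beta$ in a fundamental sequence. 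From these I define a computable ``guessing'' function. At stage $s$ and for a level $\beta$, it produces a guess at the relevant oracle, and a guess at level $\beta$ that is correct at the true stages for $\beta$ is correct from then on.

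The construction is computable and builds an increasing sequence of finite alternating sequences. At each stage $s$ it outputs a chain
\[ l_0\leq_{\beta_0} l_1\leq_{\beta_1}\dots\leq_{\beta_{k-1}} l_k \]
with $\beta_0>\beta_1>\dots$. Here $l_0$ is the last $L$-term of the current portion of the run that all levels up to $\beta_0$ believe is correct, and the lower terms are tentative extensions proposed by lower-level workers. When the level-$\beta$ guess at $q(\sigma)$ changes, or a new $u=q(\sigma)$ is believed at level $\beta_i$, I apply axiom \eqref{it:asys4} of the definition of an $\alpha$-system to the current chain. This yields $l^*$ with $\sigma u l^*\in P$ and $l_i\leq_{\beta_i} l^*$ for all $i$. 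I append $u l^*$ and cut the chain back to the levels that remain confident. Since $P$ and the relations $\leq_\beta$ are only c.e., the search for $l^*$ runs in the limit along the same approximations. The nesting property \eqref{it:asys2} guarantees that the replacement element is still above every element it supersedes in the appropriate relation, and in particular above them in $\leq_0$.

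Verification is by induction on the levels along the true stages. At the stages that are true for all $\beta<\alpha$, the lowest confident element is a genuine term of the run $\pi$. Its $U$-terms are exactly the true values of $q$, so $\pi$ is a run of $(P,q)$. Whether a stage is true, and hence $\pi$ itself, is decided using the oracle for $\Delta^0_\alpha$, so $\pi$ is $\Delta^0_\alpha$. For the c.e.\ claim I enumerate $E(l)$ for every $l$ that is ever placed on a chain. Every such $l$ satisfies $l\leq_0 l'$ for some later term $l'$ of $\pi$, by transitivity \eqref{it:asys1} and nesting \eqref{it:asys2}. Then \eqref{it:asys3} gives $E(l)\subseteq E(l')\subseteq E(\pi)$, so the enumerated set is exactly $E(\pi)$. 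Every step of the construction is effective in the given indices, and this yields the uniformity statement.

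I expect the main obstacle to be the bookkeeping that makes the chain always decreasing in levels and the recovery step always legal. When a high-level guess is refuted, all work at lower levels must be discarded, yet the elements already used for the enumeration must stay $\leq_0$-below the final path. At a limit $\alpha$ there is the further difficulty that the finitely many active levels must be chosen from a fundamental sequence so that each $q(\sigma)$ is eventually handled at a fixed level. I would first settle the case $\alpha=\gamma+1$ with $\gamma$ finite, by induction on $\gamma$. I would then treat the general case by effective transfinite recursion on the notation, as is done in~\cite{ash2000}.
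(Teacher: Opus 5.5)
Your outline is Ash's nested-guessing (workers) argument, which is exactly what the paper relies on: it states the metatheorem without proof and cites \cite{ash2000}, so your route coincides with the paper's, and in both the real work is left to Ash and Knight, namely keeping each chain rooted at the end of the prefix being revised, with strictly decreasing levels, so that condition~(4) applies. One indexing slip: under the Ash--Knight convention the paper uses, for infinite $\gamma$ a $\Delta^0_{\gamma+1}$ instruction function is $\emptyset^{(\gamma+1)}$-computable rather than $\emptyset^{(\gamma)}$-computable, so the level-$\beta$ guesses should be at a $\Delta^0_{\beta+1}$-complete set.
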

In order to use this theorem to prove our results we need another definition related to back-and-forth relations.
\begin{definition}\label{def:alphafriendly}
A finite or countable sequence $\langle \A_0,\A_1,\dots \rangle$ of structures is \define{$\alpha$-friendly} if the structures $\A_i$ are uniformly computable and for $\beta<\alpha$ the back-and-forth relations $\leq_\beta$ on the set of pairs $(\A_i, \ol a)$ for $\ol a\in A_i^{<\omega}$ are c.e., uniformly in $\beta$.
\end{definition}

\begin{lemma}\label{thm:alphalimit}
Let $( \mc A_k )_{k\in \omega}$ be an $\alpha$-friendly family of structures such that $\mc A_k\subseteq \mc A_{k+1}$ and $\mc A_{k+1}\leq_{\alpha} \mc A_k$ for all $k$. Then for any $\Sigma_{\alpha+1}^0$ set $S$ such that $n\in S$ if and only if $\exists x \langle x,n\rangle \in P$ for a $\Pi_{\alpha}^0$ set $P$ there is a uniformly computable sequence of structures $(\mc C_n)_{n\in \omega}$ such that
\[ \mc C_n \cong \begin{cases} \mc A_i,\ i =\mu y[\langle y,n\rangle \in P] & \text{if } \exists x \langle x,n\rangle \in P\\
\lim_i \mc A_i & \text{otherwise}
\end{cases}.\]
\end{lemma}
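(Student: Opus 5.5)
We will apply Ash's metatheorem for $\alpha$-systems, uniformly in $n$, in the style of the proof of the Ash--Knight pairs-of-structures theorem, with $E(\pi)$ the atomic diagram of $\mc C_n$. Fix $n$. Take $U$ to be the set of pairs (stage, guess), where the guess is either ``$k$'' (we are committed to building $\mc A_k$) or ``$\infty$''. Take $L$ to be the set of pairs $(k,\bar a)$ with $\bar a$ a finite tuple from $\mc A_k$. Set $E(k,\bar a)$ to be the finite set of atomic and negated atomic sentences, with Gödel numbers bounded by the length of $\bar a$, that $\bar a$ satisfies in $\mc A_k$, with the constants renamed to natural numbers. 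Order $L$ by
\[ (k,\bar a)\leq_\beta (k',\bar a') \iff (\mc A_k,\bar a)\leq_\beta (\mc A_{k'},\bar a') \]
for $\beta<\alpha$. By $\alpha$-friendliness these relations are c.e.\ uniformly in $\beta$, and properties (\ref{it:asys1})--(\ref{it:asys3}) are immediate. The tree $P$ lets the builder at each move extend the current tuple, so that $E$ grows by end-extension of the diagram. A move in $U$ with guess $k$ permits the builder to switch to any $(k',\bar a')$ with $k'\geq k$ whose diagram extends the current one.

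The key point is that switching from $\mc A_k$ to $\mc A_{k'}$ with $k'>k$ is cheap, because $\mc A_{k'}\leq_\alpha \mc A_k$. Moreover, $\mc A_k\subseteq\mc A_{k'}$ gives us the identity tuple $\bar a\mapsto\bar a$ with $(\mc A_k,\bar a)\leq_\beta(\mc A_{k'},\bar a)$ for $\beta<\alpha$. Here one must check that $\leq_\alpha$ on the sentences, combined with the inclusion, yields the tuple-level relation; if it does not, I would replace the identity by a witness furnished by the definition of $\leq_\alpha$. This is exactly what condition (\ref{it:asys4}) of the $\alpha$-system needs. Given a chain $l_0\leq_{\beta_0}l_1\leq\dots\leq_{\beta_{k-1}}l_k$ and an instruction $u$, we find $l^*$ in the required structure above all $l_i$ by the usual Ash--Knight amalgamation argument using the back-and-forth relations. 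Verifying (\ref{it:asys4}) in full generality, across chains whose members live in different $\mc A_k$, is the step I expect to be the main obstacle. My first attempt is to push everything into the structure with the largest index and use the nestedness and transitivity of $\leq_\beta$.

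For the instruction function $q$, since $P$ is $\Pi^0_\alpha$, the search ``the least $y\leq s$ with $\langle y,n\rangle\in P$'' is $\Delta^0_{\alpha+1}$ in the limit. However, we only have $\Delta^0_\alpha$ instructions available. We therefore let $q$ at stage $s$ guess the least $y\leq s$ such that $\langle y,n\rangle$ has not yet been seen to leave $P$ by a $\Delta^0_\alpha$ approximation. Using $\Sigma^0_\alpha$ complements, this guess can only increase, which matches the monotone switching allowed by $\leq_\alpha$. If $S(n)$ holds, the guess stabilizes at $i=\mu y[\langle y,n\rangle\in P]$, and the run eventually builds $\mc A_i$, so $\mc C_n\cong\mc A_i$. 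If $S(n)$ fails, the guess goes to infinity, and the union of the built diagrams is the direct limit $\lim_i\mc A_i$. Here we use that the tuples are carried along by the inclusion maps, arranging by dovetailing that every element of every $\mc A_k$ is eventually added.

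Finally, the metatheorem gives a c.e.\ index for $E(\pi)$ uniformly in $n$. Since we list full atomic diagrams, each $E(\pi)$ is in fact computable, and this yields the uniformly computable sequence $(\mc C_n)$.
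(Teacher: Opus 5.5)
\textbf{There is a genuine gap, at the switch you flagged, and your fallback does not close it.}

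For $u>v$, the hypothesis $\mc A_u\le_\alpha\mc A_v$ only gives, for a tuple $\bar a$ of $\mc A_v$ and $\beta<\alpha$, \emph{some} $\bar c$ in $\mc A_u$ with $(\mc A_v,\bar a)\le_\beta(\mc A_u,\bar c)$. The inclusion does not let you take $\bar c=\bar a$, because already a $\Pinf{1}$ property of $\bar a$ in $\mc A_v$ can fail in the superstructure $\mc A_u$.

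With such witnesses, condition (\ref{it:asys4}) is unproblematic. Do not push into the largest index. Instead, pull the chain back into the structure of $l_0$ by downward induction along the chain, using the back-and-forth property, to get an extension $l_0'$ of $l_0$ with $l_i\le_{\beta_i}l_0'$ for all $i$. Then apply $\mc A_u\le_\alpha\mc A_v$ once at level $\beta_0$. But every switch now reassigns the constants. When $n\notin S$ there are infinitely many switches, so the partial maps never cohere. Your step ``the tuples are carried along by the inclusion maps'' is exactly what is lost, and nothing yields $\mc C_n\cong\lim_i\mc A_i$.

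Separately, your tree lets the builder move into any $\mc A_{k'}$ with $k'$ at least the current guess. Then even when the guess settles at $i$, the run need not build $\mc A_i$; the structure index must equal the guess.

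\textbf{No argument can repair this under the stated hypotheses, because the statement is false already for $\alpha=2$.} Let $\mc A_k$ be the graph consisting of infinitely many disjoint copies of the star $K_{1,m}$ for every $m\in\omega$, together with one further star with center $v$ and leaves $\ell_1,\dots,\ell_k$. Then:
\begin{itemize}
\item $\mc A_k\subseteq\mc A_{k+1}$;
\item all $\mc A_k$ are isomorphic, so $\mc A_{k+1}\le_\alpha\mc A_k$ for every $\alpha$;
\item vertex degrees are computable, so $\le_0$ and $\le_1$ on tuples are decidable and the family is $2$-friendly;
\item but $\bigcup_k\mc A_k$ has a vertex of infinite degree.
\end{itemize}
For a $\Sigma^0_3$-complete $S$, the conclusion would make $\omega\setminus S=\{n:\mc C_n\text{ has a vertex of infinite degree}\}$ a $\Sigma^0_3$ set, which is impossible.

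The paper's proof has the same hole. In the case $u>v$ it asserts that $\mc A_u\supseteq\mc A_v$ and $\mc A_u\le_\alpha\mc A_v$ provide $q\supset p$. Its limit case, where $F^{-1}=\bigcup_s p^n_s$, depends on that coherence.

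What is needed is the tuple-level hypothesis $(\mc A_k,\bar a)\le_\beta(\mc A_{k+1},\bar a)$ for all tuples $\bar a$ of $\mc A_k$ and all $\beta<\alpha$. It implies $\mc A_{k+1}\le_\alpha\mc A_k$. It holds, for example, for the initial-segment inclusions $k\omega^\gamma\subseteq(k+1)\omega^\gamma$ with $k\ge 1$ and $\alpha=2\gamma+1$, the kind of family behind \cref{lem:ordinalshard}.

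Under this hypothesis, require every move to extend the previous partial map and to lie in exactly the guessed structure. Condition (\ref{it:asys4}) then follows from the pull-back above composed with the identity into $\mc A_u$. The maps cohere along the whole run, and both cases of your verification go through.
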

\begin{proof}
As $P$ is $\Pi_{\alpha}^0$ we have that there is a $\Delta_{\alpha}^0$ computable function $g$ such that
\begin{align*}
    \langle x,n\rangle \in P &\iff \forall t\ g(\langle x,n\rangle, t)=0\\
    \langle x,n\rangle \not \in P &\iff \exists t\ g(\langle x,n\rangle, t)=1
\end{align*}

We will define the $\alpha$-system $(L,U,\hat l, P,E,(\leq_\beta)_{\beta<\alpha})$ and an instruction function $q_n$ for each $n\in \omega$. We fix $n$. Let $C$ be an infinite set of constants for the universe of $\mc C_n$ and $\mc F$ be the set of finite partial injective functions $p$ from $C$ to $A=\bigcup_{k\in \omega} A_k$. Further let $U=\omega$ and $L=\omega\times \mc F \cup \{(-1,\emptyset)\}$. We let $\hat l=(-1,\emptyset)$.

For every $m\in \omega$, every $L$-structure $\mc B$ and $\ol b\in B^{m}$, the \emph{standard enumeration function} $E_{st}$ maps the pair $(\mc B,\ol b)$ to the atomic formulas or negations thereof $\psi(\ol x)$ with G\"odel numbers less than $m$ such that $\mc B\models \psi(\ol b)$. We extend the standard enumeration function to define $E$. Let $E(\hat l)=\emptyset$ and if $l=(i,p:\ol b\ra \ol a)$, then $E(l)=E_{st}(\mc A_i, \ol a)$.

If $l=(i,p:\ol b \ra \ol a)$ and $l'=(j,q:\ol b'\ra \ol a')$ then $l\leq_\beta l'$ if and only if $\ol b\subseteq \ol b'$ and $(\mc A_i,\ol a)\leq_\beta (\mc A_j,\ol a')$. We write $l\subseteq l'$ if $i=j$ and $p\subseteq q$, and define for all $l$ and $\beta<\alpha$, $\hat l\leq_\beta l$.

Our tree $P$ consists of the finite alternating sequences
\[ \sigma=\hat lu_1l_1u_2l_2\dots\]
where $u_k\in U$, $l_k\in L$, and the following conditions hold:
\begin{enumerate}[label=(\alph*)]
    \item $l_k=(u_k,p_k)$,
    \item\label{it:posmodified} $dom(p_k)$ and $ran(p_k)$ include the first $k$ elements of $C$ and $A_{u_k}$,
    \item if $u_k\neq u_{k+1}$, then $u_{k+1}\geq u_k$,
    \item if $u_k=u_{k+1}$, then $l_n\subseteq l_{n+1}$, and in any case $l_k\leq_0 l_{k+1}$.
\end{enumerate}
\begin{claim}
 The structure $(L,U,\hat l, P,E,(\leq_\beta)_{\beta<\alpha})$ is an $\alpha$-system.
\end{claim}
\begin{proof}
We have to check the $4$ properties from the definition of an $\alpha$-system. \cref{it:asys1,it:asys2,it:asys3} are easy to see. We turn to \cref{it:asys4}. Assume $\sigma u\in P$, where $\sigma$ ends in $l_0$, and
\[ l_0\leq_{\beta_0}\dots \leq_{\beta_{k-1}} l_k,\]
for $\alpha>\beta_0>\dots>\beta_k$. Say $l_0=(v,p)$. We have to find $l$ such that $l_i\leq_{\beta_i} l$ for $0\leq i\leq k$. If $u=v$, then by \cref{it:asys1,it:asys2,it:asys3} we can find $q\supset p$ such that $l_i \leq_{\beta_i} l=(v,q)$ for all $0\leq i\leq k$. If $u>v$, then as $\A_{u} \supseteq \A_v$ and $\A_{u}\leq_{\alpha}\A_{v}$ we get that there is $C\ra A_u:q \supset p$ such that $l_i\leq_{\beta_i}l=(u,q)$ for all $0\leq i\leq k$. Thus our system satisfies all properties of an $\alpha$-system.
\end{proof}
We can now define the instruction functions $q_n$.
\[ q_n(\sigma)=\begin{cases}
u_s & \text{if } \forall t< |\sigma|\ g(\langle u_s,n\rangle,t)=0\\
u_s+1 &\text{otherwise}
\end{cases}\]
As $q_n$ solely depends on $g$ we can compute a $\Delta_\alpha^0$ index for $q_n$. Thus applying the metatheorem we get a run $\pi_n=\hat l u_1^n l_1^n u_2^n l_2^n\dots$ of $(P,q_n)$ for each $n$ such that $E(\pi_n)$ is c.e., uniformly in $n$.
Assume that $l_s^n=(u_s^n,p_s^n)$, and let $F$ be defined by $F^{-1}=\bigcup_{s\in \omega} p_s^{n}$. Define $\mc C$ as the pullback structure of $F$. As $q_n$ is monotonic and $\A_i\subseteq \A_{i+1}$ we get that if $\langle x,n\rangle\in P$ there is an $s_0$ such that for all $s>s_0$ the system builds an embedding between $\mc C$ and $\mc A_n$, as for all $i<n$ $\A_i\subseteq \A_n$. By similar reasons if $\langle x,n\rangle\not \in P$ we build a structure embeddable in the limit. \cref{it:posmodified} guarantees that the built embeddings are isomorphisms.
\end{proof}
\begin{lemma}\label{lem:ordinalshard}
  Let $\alpha$ be a non-zero even computable ordinal. Then for any $\Sigma^0_{2\alpha+2}$ set $S$, there is a uniformly computable sequence of linear orderings $(\+C_i)$ such that
  \[ \+C_i\cong\begin{cases} n \omega^\alpha & \text{for some $n\in \nat$ if $i\in S$}\\
  \omega^{\alpha+1} & \text{if } i\not\in S\end{cases}.\]
\end{lemma}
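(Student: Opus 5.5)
The plan is to apply \cref{thm:alphalimit} to the increasing chain of linear orderings $\A_k = (k+1)\omega^\alpha$. Its limit is $\lim_k \A_k = \omega^{\alpha+1}$.

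Since $\alpha$ is even (including the case that $\alpha$ is a limit), we write $\alpha = 2\gamma$ for the appropriate ordinal $\gamma$. Then $2\alpha+2 = \alpha+\alpha+2$, and we need to arrange the chain so that it is $(2\alpha+1)$-friendly and its successive members are $\leq_{2\alpha+1}$-comparable.

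\textbf{Step 1: the back-and-forth facts.} I would first recall Ash's calculation of the back-and-forth relations for ordinals. For $\beta$ of the form $\omega^{\delta}$, Ash's analysis~\cite{ash1986} gives that $\omega^\delta\cdot m$ and $\omega^\delta\cdot m'$, for $m<m'$, satisfy $\equiv_{2\delta}$. Moreover the larger one is $\leq_{2\delta+1}$ below the smaller one, exactly as in the case $m=1, m'=2$ quoted before \cref{claim:jumpinversion}.

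For $\delta=\alpha$ this gives $\A_{k+1}\leq_{2\alpha+1}\A_k$ for all $k$. Note also that $\A_k\subseteq\A_{k+1}$ once we realize the copies as initial segments of a fixed computable copy of $\omega^{\alpha+1}$. Ash's description of $\leq_\beta$ on pairs $(\omega^\delta m,\bar a)$ is computable in terms of Cantor normal forms of the intervals determined by $\bar a$. Consequently the relations $\leq_\beta$ for $\beta<2\alpha+1$ are uniformly c.e. (indeed computable) on the standard copies, which gives $(2\alpha+1)$-friendliness.

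\textbf{Step 2: applying the lemma.} We write the given $\Sigma^0_{2\alpha+2}$ set as $S=\{n: \exists x\,\langle x,n\rangle\in P\}$ with $P$ a $\Pi^0_{2\alpha+1}$ set. \cref{thm:alphalimit}, applied with $2\alpha+1$ in place of $\alpha$, then yields a uniformly computable sequence $(\C_n)$. If $n\in S$, then $\C_n\cong\A_i=(i+1)\omega^\alpha$. If $n\notin S$, then $\C_n\cong\omega^{\alpha+1}$. This is exactly the required dichotomy.

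\textbf{Main obstacle.} The delicate point is matching the indices. The lemma is phrased with $\alpha$-friendliness and $\leq_\alpha$ and produces $\Sigma^0_{\alpha+1}$ hardness. So I must verify that Ash's relation $\omega^\alpha(k+2)\leq_{2\alpha+1}\omega^\alpha(k+1)$ holds with exactly $2\alpha+1$, and not merely $2\alpha$, when $\alpha$ is a limit. I also need the friendliness up to $2\alpha+1$, which requires $\leq_\beta$ for all $\beta\le 2\alpha$ including $\beta=2\alpha$ itself.

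This is where the parity hypothesis should enter. For even $\alpha$, the ordinal $2\alpha$ is well-behaved (for limit $\alpha$, $2\alpha=\alpha$ with the convention used in \cref{thm:highforsr}). I would check Ash's formula, namely that $\omega^\delta\leq_{2\delta}$-type facts about tails govern $\leq_{2\alpha}$ between pairs, to confirm it is c.e. If needed, I would replace the chain by $\omega^\alpha\cdot k$ placed inside successive copies so that the needed inclusions and the $\leq_{2\alpha+1}$ relations hold.
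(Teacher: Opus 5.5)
Your proposal is correct and is the argument the paper evidently intends: the lemma is stated without proof right after \cref{thm:alphalimit}, and you apply \cref{thm:alphalimit} at level $2\alpha+1$ to the friendly chain of initial segments $\omega^\alpha\cdot(k+1)$ of a standard copy of $\omega^{\alpha+1}$, using Ash's facts $\omega^\alpha\cdot m\equiv_{2\alpha}\omega^\alpha\cdot m'$ and $\omega^\alpha\cdot(k+2)\le_{2\alpha+1}\omega^\alpha\cdot(k+1)$. The parity of $\alpha$ plays no role (the argument works for every computable $\alpha$), and your aside $2\alpha+2=\alpha+\alpha+2$ is wrong under the paper's convention that $2\alpha$ means $2\cdot\alpha$ (so $2\alpha=\alpha$ for limit $\alpha$, which is the reading under which Ash's facts hold), but nothing in your argument uses it.
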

\begin{lemma}\label{lem:succordinalshard}
  Let $\alpha$ be an odd computable ordinal. Then for any $\Sigma^0_{2\alpha+2}$ set $S$, there is a uniformly computable sequence of successor linear orderings $(\+C_i)$ such that
  \[ \+C_i\cong\begin{cases} n\omega^\alpha & \text{for some $n\in \nat$ if $i\in S$}\\
  \omega^{\alpha+1}& \text{if } i\not\in S\end{cases}.\]
\end{lemma}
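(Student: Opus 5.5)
The plan is to obtain \cref{lem:succordinalshard} as a direct instance of \cref{thm:alphalimit}, applied at level $2\alpha+1$ to an increasing family of successor orderings whose limit is $\omega^{\alpha+1}$. Write the given $\Sigma^0_{2\alpha+2}$ set as $S=\{i : \exists x\,\langle x,i\rangle\in P\}$ with $P$ a $\Pi^0_{2\alpha+1}$ set.

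For $k\in\nat$, let $\A_k$ be the successor ordering $(\omega^\alpha(k+1),s)$, where $s$ is the successor relation. Fix one computable presentation of $\omega^{\alpha+1}$ with its successor relation, and take $\A_k$ to be its initial segment consisting of the first $k+1$ blocks of type $\omega^\alpha$. Then $\A_k\subseteq\A_{k+1}$, since adding a final block creates no new successor pairs among old elements. The union of the $\A_k$ is $(\omega^{\alpha+1},s)$. If \cref{thm:alphalimit} applies with $2\alpha+1$ in place of its $\alpha$, it yields a uniformly computable sequence $(\C_i)$ as required. If $i\in S$, then $\C_i\cong\A_n$ for $n=\mu y[\langle y,i\rangle\in P]$, which has the form $(n+1)\omega^\alpha$. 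If $i\notin S$, then $\C_i\cong\lim_k\A_k=(\omega^{\alpha+1},s)$. Every $\C_i$ is a successor ordering by construction.

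Two hypotheses of \cref{thm:alphalimit} remain to be checked.

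\emph{Friendliness.} The family must be $(2\alpha+1)$-friendly. The back-and-forth relations $\leq_\beta$ on tuples of ordinals below $\omega^{\alpha+1}$ are given by Ash's explicit computation~\cite{ash1986} in terms of Cantor normal forms, uniformly in $\beta$. Adding the successor relation shifts the levels down by one, because successor orderings are the structural jump of linear orderings~\cite[Chapter 10]{montalban2021}. The resulting relations remain uniformly computable in our fixed presentation, which gives friendliness.

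\emph{The inequality $\A_{k+1}\leq_{2\alpha+1}\A_k$.} This is the step I expect to be the main obstacle, and it is where the hypothesis that $\alpha$ is odd should be used. For plain linear orderings, the paper already uses that the longer ordinal is $\leq_{2\alpha+1}$ the shorter one: $\omega^\alpha 2<_{2\alpha+1}\omega^\alpha$, and more generally $\omega^\alpha(k+2)\leq_{2\alpha+1}\omega^\alpha(k+1)$. Passing to successor orderings loses one level, as in the proof of \cref{thm:highforcompSR}. To recover that level, I would write $\alpha=\lambda+2m+1$ and redo Ash's computation at the top block. There the extra successor step of $\alpha$ lets one compare $\omega^\alpha$ with $\omega^{\lambda+2m}\cdot\omega$. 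The goal is to show that the successor relation makes no difference at level $2\alpha+1$ when $\alpha$ is odd. Concretely, every $\Pi_{2\alpha+1}$ property of $(\omega^\alpha(k+1),s)$ should already be expressible in the underlying linear order. The reason is that successor pairs inside a block $\omega^\alpha$ with $\alpha$ a successor are $\Sigma_2$-definable from the order, and this cost is absorbed into the odd level.

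If this direct comparison fails, the fallback is a relativize-and-invert route. First apply \cref{thm:alphalimit} to the plain orderings $\omega^\alpha(k+1)$ relative to $\emptyset'$ at level $2\alpha$, which is legitimate since $\Sigma^0_{2\alpha+2}=\Sigma^{\emptyset'}_{2\alpha+1}$. Then convert the resulting uniformly $\emptyset'$-computable family into computable successor orderings of the same order types by a uniform jump inversion for successor orderings.
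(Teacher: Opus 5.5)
\paragraph{Verdict.} There is a genuine gap, and it sits exactly at the step you flagged. The paper gives no separate proof of this lemma. It is stated right after \cref{thm:alphalimit}, evidently as an application of it, which is your route. Your bookkeeping is fine: the initial segments, the limit $(\omega^{\alpha+1},s)$, and the outcome $\C_i\cong\A_n$.

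\paragraph{The step $\A_{k+1}\leq_{2\alpha+1}\A_k$ fails for finite $\alpha$.} The successor relation is $\Pi_1$-definable from $<$. At finite levels, putting it into the vocabulary genuinely lowers complexity by one, whatever the parity of $\alpha$; nothing is absorbed into the odd level. Already for $\alpha=1$, ``$x$ has no immediate predecessor'' is $\Pi_1$ in $(L,<,s)$. So ``there are at most $k+1$ such elements'' is a $\Pi_2$, hence $\Sinf{3}$, sentence. It is true in $\A_k=(\omega(k+1),s)$ and false in $\A_{k+1}=(\omega(k+2),s)$, so $\A_{k+1}\not\leq_3\A_k$. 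Your own friendliness paragraph predicts this: a shift by one turns $\leq_{2\alpha+1}$ on the successor orderings into $\leq_{2\alpha+2}$ on the underlying orders. At that level, $\omega^\alpha(k+1)$ and $\omega^\alpha(k+2)$ are separated by counting their points divisible by $\omega^\alpha$.

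\paragraph{The statement itself fails for finite odd $\alpha$.} Let $R_1(x)$ be $\forall y\,\neg s(y,x)$, and let $R_{j+1}(x)$ be $R_j(x)\land\forall y<x\,\exists z\,(y<z<x\land R_j(z))$. In an ordinal, $R_j$ picks out the positions divisible by $\omega^j$. In a computable successor ordering, $R_j$ is uniformly $\Pi^0_{2j-1}$. Each $\omega^\alpha n$ has only finitely many $R_\alpha$-points, while $\omega^{\alpha+1}$ has infinitely many. So for any sequence as in the lemma, $i\notin S$ iff $\C_i$ has infinitely many $R_\alpha$-points, which is a $\Pi^0_{2\alpha+1}$ condition. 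Hence $S$ would be $\Sigma^0_{2\alpha+1}$, and no $\Sigma^0_{2\alpha+2}$-complete $S$ can be handled.

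\paragraph{Your fallback.} The same upper bound rules it out. It also runs jump inversion backwards. From a $\Delta^0_2$ copy of the order you want a computable copy of the successor ordering, i.e.\ of its jump. Jump inversion, where it holds, goes the other way: from an $X'$-computable jump to an $X$-computable structure.

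\paragraph{Where your argument does work.} The shift disappears at $\omega$. For $n\geq 1$, $\Sinf{n}$ over $\{<,s\}$ is contained in $\Sinf{n+1}$ over $\{<\}$. Hence the $\Sinf{\beta}$ formulas over the two vocabularies agree up to equivalence once $\beta\geq\omega$. So for infinite $\alpha$ the following hold:
\begin{itemize}
\item $\leq_{2\alpha+1}$ between the successor orderings is the same relation as between the underlying ordinals;
\item Ash's computation then gives $\A_{k+1}\leq_{2\alpha+1}\A_k$;
\item friendliness at the finite levels can again be read off from Cantor normal forms.
\end{itemize}
Thus your argument goes through for infinite $\alpha$, but for this reason, not because $\alpha$ is odd. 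For finite odd $\alpha$, the lemma can only hold with $\Sigma^0_{2\alpha+1}$ in place of $\Sigma^0_{2\alpha+2}$.
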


\subsection{Applications}\label{subsec:indexsetcalculations}
\begin{lemma}\label{lem:alphato2alpha}
  Suppose that $\beta < \ock$ and that the automorphism orbit of $\ba\in \A$ is $\Sinf{\beta}$-definable. Then it is $\Sicom{2\beta+1}$-definable.
\end{lemma}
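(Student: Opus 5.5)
The plan is to replace the given $\Sinf{\beta}$ definition of the orbit by a back-and-forth description of the orbit, and then to write that description down as a computable formula. Throughout I take $\A$ to be computable, which is the setting of this section. The case $\beta=0$ is trivial: a finitary quantifier-free definition is already computable. So assume $\beta\geq 1$.

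\emph{Step 1 (orbit as a back-and-forth class).} Let $\phi$ be a $\Sinf{\beta}$ formula defining the orbit of $\ba$. I claim the orbit equals $O=\{\bar b : (\A,\bar b)\leq_\beta(\A,\ba)\}$ in Karp's convention recalled above (every $\Sinf{\beta}$ formula true of $\ba$ is true of $\bar b$). Automorphic images of $\ba$ clearly lie in $O$. Conversely, if $\bar b\in O$ then $\A\models\phi(\bar b)$ because $\A\models\phi(\ba)$, so $\bar b$ is in the orbit. Equivalently, $\bar b\in O$ if and only if every $\Pinf{\beta}$ formula true of $\bar b$ is true of $\ba$.

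\emph{Step 2 (pass to computable formulas).} For computable (indeed hyperarithmetic) structures and $\beta<\ock$, Ash and Knight show that the relations $\leq_\beta$ are unchanged if one only considers computable formulas. Hence $\bar b\in O$ if and only if every $\Sicom{\beta}$ formula true of $\ba$ is true of $\bar b$. This is the step I expect to need the most care. I would cite the relevant theorem of~\cite{ash2000} rather than reprove it, and check that the direction and the indexing of $\leq_\beta$ match the convention used here.

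\emph{Step 3 (uniformize the index set into the formula).} The set $T=\{\psi\in\Sicom{\beta} : \A\models\psi(\ba)\}$ is $\Sigma^0_\beta$, since satisfaction of $\Sicom{\beta}$ formulas in computable structures is $\Sigma^0_\beta$~\cite[Theorem 7.5]{ash2000}. A conjunction over a noncomputable index set is not a computable formula, so I cannot write $\bigwedge_{\psi\in T}\psi(\bar x)$ directly. Instead I use the standard fact that a $\Pi^0_\beta$ predicate $R(n)$ can be expressed, uniformly in $n$, by a $\Picom{\beta}$ sentence $\chi_n$ built only from $\top$ and $\bot$ along the computable tree witnessing $R$'s definition. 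Take $R(\psi)$ to be $\psi\notin T$, and set
\[ \Phi(\bar x)=\bigwedge_{\psi\in\Sicom{\beta}} \bigl(\chi_\psi \lor \psi(\bar x)\bigr). \]
Since the index set of $\Sicom{\beta}$ formulas is computable, this is a computable conjunction. By Step 2, $\Phi$ defines $O$.

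\emph{Step 4 (count complexity).} Each conjunct is a disjunction of a $\Picom{\beta}$ and a $\Sicom{\beta}$ formula, hence is $\Sicom{\beta+1}$. So $\Phi$ is $\Picom{\beta+2}$, and therefore $\Sicom{\beta+3}$. I then compare $\beta+3$ with $2\beta+1$. For infinite $\beta$ one has $\beta+3\leq\beta+\beta\leq 2\beta+1$, and likewise for $\beta\geq 2$. For $\beta=1$ this count is too weak, and I would tighten the bookkeeping. The conjunction can be organized so that each conjunct is $\Sicom{\beta}$ once the parameter tuple is existentially quantified out. Alternatively, for $\beta=1$ one can observe directly that an orbit defined by a $\Sinf{1}$ formula is defined by a single finitary existential disjunct, namely the one satisfied by $\ba$. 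The general slack between $\beta+3$ and $2\beta+1$ is what the statement allows, so the only real obstacle remains the Ash--Knight equivalence of $\leq_\beta$ with its computable-formula version in Step 2.
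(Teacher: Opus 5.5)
\paragraph{The gap is Step 2.} No citation can close it, because the claimed theorem is false, even in the special situation of the lemma. Suppose Step 2 held. Then for every tuple $\ba$ with $\Sinf{\beta}$-definable orbit, $\bar b$ would lie in that orbit exactly when the $\Sicom{\beta}$-type of $\ba$ is contained in that of $\bar b$. That is a $\Pi^0_{\beta+1}$ relation, uniformly in $\ba,\bar b$.

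Here is how this fails at $\beta=2$. Take computable copies $\A_1,\A_2$ of a Scott rank $2$ structure such that every isomorphism between them computes every $\Delta^0_5$ set; these exist by \cref{thm:highforsr}. Let $\mathcal M$ be their union with an added equivalence relation whose two classes are $A_1$ and $A_2$.
\begin{itemize}
\item Relativizing a $\Sinf{2}$ orbit formula to the class of $x_1$ shows that orbits of tuples from $A_1$ are still $\Sinf{2}$-definable in $\mathcal M$.
\item Such an orbit meets $A_2$ exactly in the images of $\ba$ under isomorphisms $\A_1\to\A_2$.
\item So Step 2, applied in the computable structure $\mathcal M$, would give a $\Pi^0_3$ back-and-forth family between $\A_1$ and $\A_2$.
\item Then $\mathbf 0'''$ would compute an isomorphism between them, which is impossible since $\mathbf 0'''$ does not compute $\emptyset^{(4)}$.
\end{itemize}
The same contradiction arises at every $\beta=\lambda+n$ with $\lambda$ zero or a limit and $n\geq 2$, since there $\beta+2<2\beta+1$.

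There was also a warning sign inside your own argument. Were Step 2 true, your $\Phi$ would already be $\Picom{\beta+1}$, because each conjunct is both $\Sicom{\beta+1}$ and $\Picom{\beta+1}$. That is far better than the $\Sicom{2\beta+1}$ bound the lemma claims.

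\paragraph{What the paper does instead.} The paper never passes through computable types.
\begin{itemize}
\item It keeps your Step 1, writing the orbit as $\{\bar b:\bar b\leq_\beta\ba\}$.
\item It expresses $\bar b\leq_\beta\ba$ as the existence of a winning strategy for the $\exists$-player in the $\beta$-back-and-forth game, unravelled along the game tree through the sets $\mathrm{WS}_\gamma$.
\item Each round contributes one $\forall\exists$ pair of number quantifiers over tuples, so in a computable structure this is a $\Pi^0_{2\beta}$ condition. This doubling is exactly what your Step 2 tries to avoid, and the argument above shows it cannot be avoided in general.
\item The same computation works in every copy, so the orbit is relatively intrinsically $\Pi^0_{2\beta}$.
\item The theorem of Ash, Knight, Manasse, and Slaman (and independently Chisholm) then gives a $\Picom{2\beta}$, hence $\Sicom{2\beta+1}$, definition.
\end{itemize}
The missing idea is to reach a computable formula through the arithmetic complexity of $\leq_\beta$ and the characterization of relatively intrinsic relations. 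It cannot go through an identification of $\Sinf{\beta}$-types with $\Sicom{\beta}$-types.
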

\begin{proof}
 Suppose that $\bar a$ has $\Sinf{\beta}$ definable automorphism orbit. Then the elements $\bar b$ such that $\bar b\leq_\beta\bar a$ are precisely the elements in $\bar a$'s automorphism orbit. So having fixed $\bar a$, consider the labeled well-founded tree $T$ obtained from the $\beta$-back-and-forth game.

  The root node is labeled with $\bar a$ and the ordinal $\beta$. For readability, if $\nu$ is a node on the tree, let $\nu_i=\nu\restrict i$. Then, nodes $\nu$ at even levels are labeled with the plays of the $\forall$-player, i.e., decreasing ordinals $o(\nu)<\beta$ and tuples $t(\nu)$, while nodes at odd levels are labeled with responses of the $\exists$-player, i.e., tuples $t(\nu)$ such that
  \[
    D(t(\nu_1),t(\nu_3),t(\nu_5),t(\nu_7),\dots)=D(t(\nu_0)=\bar a,t(\nu_2),t(\nu_4),t(\nu_6), \dots).
  \]
  Here $D$ denotes the atomic diagram of the tuple in $\+A$.
  
  Note that $T$ has rank $2\beta$. 
  Now, $\bar b\in aut(\bar a)$ if and only if the $\exists$-player has a winning strategy in the $\beta$-back-and-forth game $\bar b\leq_\beta \bar a$ if and only if for the unique node $\nu$ at level $1$ with $t(\nu)=\bar b$, $\nu\in \mathrm{WS}_\beta$ where $\mathrm{WS}_\beta$ is defined inductively as follows:
  \begin{align*}
    \mathrm{WS}_0&=\{ \nu\in T: \forall x (\nu\concat x\in T \land o(\nu\concat x)=0)\implies \exists y\ \nu\concat x\concat y\in T\}\\
    \mathrm{WS}_{\alpha+1}&=\{ \nu\in T: \forall x (\nu\concat x\in T \land o(\nu\concat x)=\alpha)\implies \exists y\ \nu\concat x\concat y\in \mathrm{WS}_\alpha\}\\
    \mathrm{WS}_{\alpha}&=\bigcap_{\gamma<\alpha} \mathrm{WS}_\gamma \text{ if $\alpha$ limit}
  \end{align*}
  Intuitively, $\nu$ is in the set $\mathrm{WS}_{\alpha}$ for some $\alpha$ if whenever at the current game state specified by $\nu$, the $\forall$-player plays the ordinal $\alpha$ in the next round, then the $\exists$-player has a winning strategy.
  A simple transfinite induction shows that $aut(\bar a)=\{t(\nu): |\nu|=1\land \nu\in \mathrm{WS}_\beta\}$ and that this set is $\Pi^0_{2\beta}$. It is furthermore trivially automorphism invariant and thus relatively intrinsically $\Pi^0_{2\beta}$. By a classical result of Ash, Knight, Manasse, and Slaman~\cite{ash1989}, and independently Chisholm~\cite{chisholm1990}, this implies that $aut(\bar a)$ is $\Picom{2\beta}$-definable, and thus also $\Sicom{2\beta+1}$-definable as required.
\end{proof}
\begin{theorem}\label{thm:limitiffcompdefined}
  Suppose $\lambda\leq \ock$ is a limit ordinal. A computable structure $\+A$ has Scott rank $\lambda$ if and only if $\+A$ has computably defined Scott rank $\lambda$. Furthermore, if $\lambda$ is computable, then the equality also holds for $\lambda+1$.  Finally, it also holds at 1.
\end{theorem}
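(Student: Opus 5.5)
The argument rests on the ordinal arithmetic of \cref{lem:alphato2alpha}: for $\beta<\lambda$ with $\lambda$ a limit, we have $2\beta+1<\lambda$. I will show both directions by comparing the sets of orbits definable at each level.

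\emph{Limit case.} Let $\lambda\le\ock$ be a limit. First suppose $\A$ has Scott rank $\lambda$. By the Remark after the definition, the orbit of each tuple $\bar a$ is $\Sinf{\beta}$-definable for some $\beta<\lambda$. Then \cref{lem:alphato2alpha} gives that it is $\Sicom{2\beta+1}$-definable, and $2\beta+1<\lambda$ since $\lambda$ is a limit. When $\lambda=\ock$ this uses that $\beta<\ock$ is computable, which holds because $\beta<\lambda=\ock$. So every orbit is $\Sicom{\gamma}$-definable for some $\gamma<\lambda$, hence for $\gamma\le\lambda$.

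Minimality is the next point. If every orbit were $\Sicom{\gamma}$-definable for some $\gamma\le\delta$ with $\delta<\lambda$, then every orbit would be $\Sinf{\delta}$-definable, since computable formulas are infinitary ones of the same complexity. This would give Scott rank at most $\delta<\lambda$, a contradiction. Conversely, suppose $\A$ has computably defined Scott rank $\lambda$. Then every orbit is $\Sinf{\lambda}$-definable, so the Scott rank is at most $\lambda$. If the Scott rank were some $\delta<\lambda$, then \cref{lem:alphato2alpha} would make every orbit $\Sicom{2\delta+1}$-definable with $2\delta+1<\lambda$, contradicting the minimality of the computably defined rank.

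\emph{Case $\lambda+1$, $\lambda$ computable.} Suppose $\A$ has Scott rank $\lambda+1$. Each orbit is $\Sinf{\lambda+1}$-definable, so it is $\Sicom{2(\lambda+1)+1}=\Sicom{\lambda+3}$-definable. That bound is too weak, so I will redo the proof of \cref{lem:alphato2alpha} at $\beta=\lambda+1$ with a sharper count. The set $\{\bar b:\bar b\le_{\lambda+1}\bar a\}$ is computed by the $\mathrm{WS}$-recursion. At stage $\lambda$ the set is $\Pi^0_{\lambda}$, because a countable intersection of $\Pi^0_{<\lambda}$ sets is $\Pi^0_\lambda$, using $2\gamma<\lambda$ for $\gamma<\lambda$. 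One further $\forall\exists$ step then gives $\Pi^0_{\lambda+2}$. This is relatively intrinsic, so by Ash--Knight--Manasse--Slaman/Chisholm the orbit is $\Picom{\lambda+2}$-definable.

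That is still one level too high. The fix is to use a $\Sinf{\lambda+1}$ defining formula directly, namely $\bigvee_i\exists\bar y\,\psi_i(\bar x,\bar y)$ with $\psi_i\in\Pinf{\lambda}$. Pick the disjunct and witness $\bar c$ satisfied by $\bar a$. The orbit of $\bar a\bar c$ then carries $\Pinf{\lambda}$ information, and I would argue that it is defined by $\le_\lambda$ to $\bar a\bar c$. That relation is relatively intrinsically $\Pi^0_\lambda$ by the limit computation above, hence $\Picom{\lambda}$-definable. Existentially quantifying $\bar c$ gives a $\Sicom{\lambda+1}$ definition of the orbit of $\bar a$.

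\textbf{Expected obstacle.} The key point, which I expect to be the main obstacle, is showing that $\{\bar y:\bar a\bar y\ge_\lambda \bar a\bar c\}$ is exactly what is needed. Here $\psi_i\in\Pinf{\lambda}$ is preserved under $\ge_\lambda$ in the right direction, and the formula defines the orbit, so every $\bar b$ with a witness $\bar d$ satisfying $\bar b\bar d\ge_\lambda\bar a\bar c$ satisfies $\psi_i$ and so lies in the orbit. Thus $\exists\bar y\,[\bar x\bar y\ge_\lambda\bar a\bar c]$ defines the orbit. With this, minimality is handled as before: a computably defined rank at most $\lambda$ would give Scott rank at most $\lambda$, and the converse direction is immediate.

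\emph{Case $1$.} If $\A$ has Scott rank $1$, each orbit is defined by a $\Sinf{1}$ formula, that is, a disjunction of finitary existential formulas. Passing to the disjunct true of $\bar a$ yields a finitary, hence computable, $\Sigma_1$ definition. Ranks $0$ agree trivially, and the converse is immediate as before.
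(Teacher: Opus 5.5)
Your proposal is correct and follows the paper's proof. The limit case goes through the Remark, \cref{lem:alphato2alpha} and $2\beta+1<\lambda$. For $\lambda+1$ you pass to a $\Pinf{\lambda}$ disjunct with a witness $\bar c$, write the orbit as $\{\bar x : \exists \bar y\ \bar a\bar c\le_\lambda \bar x\bar y\}$, and use that $\le_\lambda$ is relatively intrinsically $\Pi^0_{2\lambda}=\Pi^0_\lambda$ before invoking Ash--Knight--Manasse--Slaman/Chisholm; applying that theorem one level lower and then adding the existential quantifier is immaterial. The blemishes are cosmetic: the orbit of $\bar a\bar c$ itself need not be cut out by $\le_\lambda$, and $\{\bar y : \bar a\bar y\ge_\lambda\bar a\bar c\}$ should range over pairs $\bar x\bar y$. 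Your final formula $\exists\bar y\,[\bar x\bar y\ge_\lambda\bar a\bar c]$ is the right one, with the omitted reverse inclusion coming from automorphic images of $\bar a\bar c$.
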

\begin{proof}
For the left-to-right direction, we first dispense with the case for Scott rank 1.  Suppose that an automorphism orbit is defined by a $\Sinf{1}$ formula.  By passing to the relevant disjunct, we may assume this formula is finite, and thus $\Sicom{1}$.

Now if $\+A$ has Scott rank a limit ordinal $\lambda$, then every automorphism orbit is definable by a $\Sinf{\beta}$ formula for some $\beta<\lambda$. By \cref{lem:alphato2alpha} it is also definable by a $\Sicom{2\beta+1}$-formula. As $2\beta+1<\lambda$ for $\beta<\lambda$, $\+A$ has computably defined Scott rank $\lambda$.

For the case that $\+A$ has Scott rank $\lambda+1$, let $\bar a$ have $\Sinf{\lambda+1}$ definable automorphism orbit. Then without loss of generality there is $\phi\in \Pinf{\lambda}$ such that $\exists \bar y \phi(\bar x,\bar y)$ defines the automorphism orbit of $\bar a$. Pick $\bar c$ such that $\phi(\bar a,\bar c)$. Then for every $\bar b$, $\bar b\in aut(\bar a)$ if and only if there is a $\bar d$ with $\bar a,\bar c\leq_{\lambda}\bar b,\bar d$. Constructing the tree and $\mathrm{WS}_\lambda$ similar to the tree and $\mathrm{WS}_\beta$ above, we get that 
  \[ \bar b\in aut(\bar a) \iff \exists \bar d\exists \nu \ t(\nu)=\bar b\bar d \land |\nu|=1 \land \nu \in \mathrm{WS}_{\lambda}.\] 
  As argued previously, $\mathrm{WS}_\lambda$ is $\Pi^0_{2\lambda}$, and $2\lambda = \lambda$ since $\lambda$ is limit. It follows that $aut(\bar a)$ is relatively intrinsically $\Sigma^0_{\lambda+1}$ and thus $\Sicom{\lambda+1}$-definable.

  For the right-to-left direction, if $\+A$ has computably defined Scott rank $\lambda$, then it has Scott rank at most $\lambda$, so we must only show that it cannot have lower Scott rank. This is trivial for $\lambda=1$.
  For the case where $\lambda$ is a limit ordinal we prove the contraposition. If $\lambda$ is a limit ordinal and $\+A$ has Scott rank $\beta<\lambda$, then by \cref{lem:alphato2alpha} all orbits are $\Sicom{2\beta+1}$ definable. But $2\beta+1<\lambda$ and thus $\+A$ has computably defined Scott rank $2\beta+1<\lambda$. The same argument works for the case where $\+A$ has computably defined Scott rank $\lambda+1$.
\end{proof}
\begin{theorem}\label{thm:indexsets} 
    For all $\alpha<\ock$, the following index sets are $\Sigma^0_{2\alpha+2}$-complete.
    \begin{enumerate}
      \item The index set of structures with Scott rank $\alpha$.
      \item The index set of structures with computably defined Scott rank $\alpha$.
    \end{enumerate}
  \end{theorem}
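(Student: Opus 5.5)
We prove the upper bound and the lower bound separately, and treat both index sets in parallel.

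\emph{Upper bound.} For a computable structure $\A$, ``Scott rank $\le\alpha$'' says that for every tuple $\bar a$ there is a tuple-orbit definition of the right complexity. By \cref{lem:alphato2alpha}, if the orbit of $\bar a$ is $\Sinf{\alpha}$-definable, then $aut(\bar a)=\{\bar b:\bar b\le_\alpha \bar a\}$. Hence Scott rank $\le\alpha$ is equivalent to
\[\forall \bar a\,\forall\bar b\,\bigl(\bar b\le_\alpha\bar a \rightarrow \exists \bar a\le_\alpha\text{-}\text{symmetric}\bigr),\]
that is, $\forall \bar a\forall \bar b\,(\bar b\le_\alpha \bar a\rightarrow \bar a\le_\alpha\bar b)$. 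Equivalently, $\le_\alpha$ and $\le_{\alpha+\omega}$ agree, which by Montalbán's characterization forces the orbits to be $\Sinf\alpha$. Since $\le_\alpha$ is $\Pi^0_{2\alpha}$ in a computable structure (as in the proof of the easy lemma), this condition is $\Pi^0_{2\alpha+1}$. Scott rank exactly $\alpha$ is then the conjunction of ``rank $\le\alpha$'' ($\Pi^0_{2\alpha+1}$) with ``not rank $\le\beta$'' for $\beta<\alpha$. For successor $\alpha=\beta+1$ the latter is $\Sigma^0_{2\beta+1}$. For limit $\alpha$ it is a $\Pi^0_{<2\alpha}$ conjunction, still within $\Sigma^0_{2\alpha+2}$. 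So the index set is $\Sigma^0_{2\alpha+2}$, and in fact a difference of $\Pi^0_{2\alpha+1}$ sets.

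For computably defined Scott rank the same bound holds. In the limit case and at $\lambda+1$, \cref{thm:limitiffcompdefined} makes the index sets identical. In general, the orbit of $\bar a$ is $\Sicom{\beta}$-definable for some $\beta\le\alpha$ iff $\exists$ an index $e$ of a $\Sicom{\le\alpha}$ formula with $\A\models\phi_e(\bar a)$ ($\Sigma^0_\alpha$) and $\forall \bar b(\phi_e(\bar b)\rightarrow \bar b\le_\alpha\bar a)$ ($\Pi^0_{2\alpha}$ inside). This gives $\Pi^0_{2\alpha+2}$ naively; with care, using the fact that the only candidate is $aut(\bar a)=\{\bar b:\bar b\le_\alpha\bar a\}$, it should reduce to $\Sigma^0_{2\alpha+2}$. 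I expect this bookkeeping to need care, and I would check it first.

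\emph{Lower bound.} Fix a $\Sigma^0_{2\alpha+2}$ set $S$. Use \cref{lem:ordinalshard} (for $\alpha$ even) or \cref{lem:succordinalshard} (for $\alpha$ odd) to get uniformly computable $\C_i$ with $\C_i\cong n\omega^\alpha$ if $i\in S$ and $\C_i\cong\omega^{\alpha+1}$ otherwise. These lemmas are the main obstacle, but they may be assumed, and they are presumably proved via \cref{thm:alphalimit} applied to $\A_k=k\omega^\alpha$, whose limit is $\omega^{\alpha+1}$. We then need the right Scott ranks. Using Ash's computation of back-and-forth relations on ordinals, $n\omega^\alpha$ has Scott rank $\alpha'$ and $\omega^{\alpha+1}$ has strictly larger rank, where $\alpha'$ depends on the parity bookkeeping: linear orderings roughly double the rank, which is why successor orderings are used in the odd case to shift by one.

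If the ranks do not land exactly on $\alpha$, we take the disjoint union with a fixed computable structure of Scott rank exactly $\alpha$. This ensures that $i\in S$ gives rank $\alpha$, while $i\notin S$ gives rank $>\alpha$. For computably defined rank, the orderings $n\omega^\alpha$ have orbits definable by computable formulas, so the same reduction works.

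Finally, for $\alpha$ not covered by the parity lemmas, or for small cases, we adjust by composing with the jump-inversion machinery of \cref{sec:cSr}.
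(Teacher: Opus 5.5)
There are genuine gaps in both halves of your argument.

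\emph{Upper bound.} The criterion you use is false. Symmetry of $\leq_\alpha$ on tuples of $\A$ is necessary for Scott rank $\le\alpha$: if $aut(\bar a)$ is $\Sinf{\alpha}$-definable, then $\bar b\leq_\alpha\bar a$ forces $\bar b\in aut(\bar a)$. It is not sufficient, and nothing you cite upgrades it.
\begin{itemize}
\item With the paper's convention, $\leq_0$ is always symmetric, because $\Sinf{0}$ is closed under negation. So your criterion assigns Scott rank $0$ to every structure.
\item In $(\mathbb{Q},<,c_n)_{n\in\nat}$ with $c_n=-1/(n+1)$, quantifier elimination makes $\leq_1$ symmetric on all tuples. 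Yet every existential formula true of $0$ is also true of $1$, so the orbit $\{0\}$ of $0=\sup_n c_n$ is not $\Sinf{1}$-definable.
\end{itemize}
Your own count should have raised the alarm. It places the index set in $\Delta^0_{2\alpha+2}$ (you describe it as ``a difference of $\Pi^0_{2\alpha+1}$ sets''), which is incompatible with the $\Sigma^0_{2\alpha+2}$-hardness you then set out to prove.

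What is missing is an existential witness isolating each orbit. An example is Montalb\'an's criterion that no tuple is $\alpha$-free: for each $\bar a$ there are $\beta<\alpha$ and $\bar b$ such that $\bar a\bar b\leq_\beta\bar a'\bar b'$ implies $\bar a\leq_\alpha\bar a'$. Evaluated as you did, this has the shape $\forall\bar a\,\exists\bar b\,\Pi^0_{2\alpha}$, i.e.\ $\Pi^0_{2\alpha+2}$, not $\Sigma^0_{2\alpha+2}$.

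The paper's formula $(\ast)$ gets its $\Sigma$ form from a leading $\exists\bar p$. However, its matrix is the same symmetry condition as yours, relativized to $\bar p$; at $\alpha=0$ it holds in every structure. So you cannot simply borrow it. Before doing any bookkeeping, check on which side of the hierarchy these index sets really lie. At $\alpha=1$, a direct cycle-coding construction appears to make the Scott-rank-$1$ index set $\Pi^0_4$-hard.

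The computably defined case is openly unfinished in your sketch. Your step ``the only candidate is $aut(\bar a)=\{\bar b:\bar b\leq_\alpha\bar a\}$'' is circular, since that identity presupposes the rank bound you are trying to verify. The paper instead writes out $(\star)$ and invokes \cref{thm:limitiffcompdefined} at $1$, $\lambda$ and $\lambda+1$.

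\emph{Lower bound.} Padding with a disjoint summand can only raise Scott rank. It therefore helps only if the case $i\in S$ already has rank $\le\alpha$ and the case $i\notin S$ has rank $>\alpha$.
\begin{itemize}
\item The orderings $\omega^\alpha\cdot n$ have Scott rank at least $2\alpha$; the paper itself assigns them rank $2\alpha$. So for finite $\alpha\ge 1$ both outcomes land above $\alpha$, and your map is not a reduction to the index set.
\item Switching to orderings of the right rank runs into a level problem. For orderings built on $\omega^\beta$, under the promise that $\C_i$ is either $\omega^\beta\cdot n$ or $\omega^{\beta+1}$, the condition ``$\C_i\cong\omega^\beta\cdot n$ for some $n$'' just says that some final segment of $\C_i$ is isomorphic to $\omega^\beta$. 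That is a $\Sigma^0_{2\beta+2}$ condition, while these orderings have rank at least $2\beta$.
\end{itemize}
So linear orderings reach only about half the required level. The same count affects the paper's use of \cref{lem:ordinalshard,lem:succordinalshard}. Your closing appeal to jump inversion is exactly where a real argument would be needed, and none is given.
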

  \begin{proof}
    To see membership, note that a structure $\+A$ has Scott rank $\alpha$ if and only if there is a parameter such that, after adding the parameter, all $\Pinf{\alpha}$-types are $\Sinf{\alpha}$ isolated, in terms of back-and-forth relations this translates to
    \begin{equation}\tag{$\ast$}\label{eq:sralpha} \exists \bar p \forall \bar a \forall \bar c\  \bar p\bar c\leq_\alpha\bar p\bar a \to \bar a\leq_\alpha \bar c. 
    \end{equation}
    As $\leq_\alpha$ is $\Picom{2\alpha}$, \cref{eq:sralpha} is $\Sigma^0_{2\alpha+2}$ when evaluated in computable structures.
    
    Recall that a structure has computably defined Scott rank $1$ if and only if it has Scott rank $1$. Thus, \cref{eq:sralpha} works in this case. For $\alpha=\lambda$ or $\alpha=\lambda+1$ where $\lambda$ is a limit, we similarly have that every structure of Scott rank $\alpha$ has computably defined Scott rank $\alpha$ by \cref{thm:limitiffcompdefined} and thus \cref{eq:sralpha} works for this case. For all other ordinals, a structure has computably defined Scott rank $\alpha$ if and only if
    it satisfies
    \begin{equation}\label{eq:compsralpha}\tag{$\star$}
    \begin{aligned} \exists \bar p \forall \bar a (\exists \phi\in \Sicom{\alpha}) (\forall \psi\in \Sicom{\alpha}) \forall \bar c &\left(\phi^\+A(\bar p,\bar a) \land \phi^\+A(\bar p, \bar c) \right)\to \left(\psi^\+A(\bar p, \bar a)\leftrightarrow \psi^\+A(\bar p, \bar c)\right)\\
    \land(\forall \psi\in \Sicom{\alpha} )\forall \bar c & \left(\psi^\+A(\bar p, \bar a)\leftrightarrow \psi^\+A(\bar p, \bar c)\right)\to \bar p\bar a \leq_\alpha \bar p\bar c\end{aligned}
    \end{equation}
    The set of $\Sicom{\alpha}$ formulas is decidable, and evaluation of $\Sicom{\alpha}$ formulas in computable structures is $\Sicom{\alpha}$. Given that $\leq_{\alpha}$ is $\Picom{2\alpha}$, evaluating \cref{eq:compsralpha} in computable structures is clearly $\Sigma^0_{2\alpha+2}$ if $\alpha>\lambda+1$ where $\lambda$ is a limit ordinal.

  The hardness of both sets follows from \cref{lem:ordinalshard,lem:succordinalshard} and the fact that for computable $\alpha$ and $n\in\nat$, the ordinals $n\omega^\alpha$ have computably defined Scott rank $2\alpha$, while $\omega^{\alpha+1}$ does not. By adding the successor relation to the vocabulary we obtain examples for odd ranks.
  \end{proof}

 \section{Highness for noncomputable Scott ranks}\label{sec:ncSr}
Here we briefly discuss highness for Scott ranks $\ock$ and $\ock+1$. By \cref{thm:limithighforsr} a computable structure has Scott rank $\ock$ if and only if it has computably defined Scott rank $\ock$. On the other hand, structures of computably defined Scott rank $\ock+1$ do not exist, since there are no $\Sicom{\ock}$ or $\Sicom{\ock+1}$ formulas.

As mentioned before, if $\*d$ is high for Scott rank $\ock+1$, then $\*d$ will compute an isomorphism between any two isomorphic computable structures of Scott rank at most $\ock+1$, i.e., all isomorphic computable structures.  Thus highness for Scott rank $\ock+1$ is simply highness for isomorphism as studied in \cite{cft23}.

The situation for Scott rank $\ock$ is less clear.  Since highness for Scott rank $\ock$ implies highness for Scott rank $\alpha$ for every $\alpha < \ock$, it follows by \cref{thm:highforsr} that a degree which is high for Scott rank $\ock$ must compute all $\Delta^1_1$ sets.  We will show more.

\begin{theorem}
If $\*d$ is high for (computably defined) Scott rank $\ock$, then $\*d$ computes a descending sequence through any ill-founded computable linear order.
\end{theorem}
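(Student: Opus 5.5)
Fix an ill-founded computable linear order $L$. The plan is to build two isomorphic computable structures of Scott rank $\ock$ such that any isomorphism between them computes a descending sequence in $L$. Since highness for Scott rank $\ock$ coincides with highness for computably defined Scott rank $\ock$ by \cref{thm:limitiffcompdefined}, it suffices to handle Scott rank $\ock$.

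The natural candidates are built from the Harrison-style ordering $\mathcal{H}_L$. Concretely, let $T_L$ be a computable tree whose paths correspond to descending sequences in $L$: nodes are finite $L$-descending sequences. Since $L$ is ill-founded, $T_L$ has a path. Everything hinges on $T_L$ having no hyperarithmetic path. Otherwise a $\Delta^1_1$ descending sequence exists, and $\*d$ already computes every $\Delta^1_1$ set by the remark preceding the theorem. So I split into two cases.

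In the first case, $L$ has a hyperarithmetic descending sequence. This is immediate from \cref{thm:highforsr}, since $\*d$ computes all $\Delta^1_1$ sets.

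In the second case, $L$ is a computable ill-founded order with no hyperarithmetic descending sequence. Then $L$'s well-founded part has order type $\ock$, and the tree $T_L$ has no hyperarithmetic paths. I would take the standard "tree with infinitely many copies" construction.
\begin{enumerate}
\item Let $\A$ be the computable tree obtained from $T_L$ by attaching, below every node, infinitely many copies of every finite extension pattern. This is the usual $\omega$-branching saturation, so that the tree is homogeneous enough.
\item Let $\B$ be $\A$ together with a disjoint computable copy of $T_L$ glued in. Alternatively, let $\B$ be $\A$ with one extra infinitely-branching copy hung at the root, marked so that its nodes are distinguished.
\end{enumerate}
The standard choice, following the Harrison order and \cite{cft23}, is to make $\A \cong \B$ abstractly by a back-and-forth argument. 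Any isomorphism $f\colon \B \to \A$ must then map a designated path-witness in $\B$, which is a computable path-like subset, to a genuine path through $T_L$. From such a path we read off a descending sequence in $L$ computably in $f$.

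The key steps, in order, are the following.
\begin{enumerate}
\item Verify $\A \cong \B$. This uses that $T_L$ has paths and that the rank analysis of nodes (rank $<\ock$ versus nodes with rank $\infty$, i.e.\ nodes extendible to paths) matches in both structures.
\item Show that an isomorphism computes a path.
\item Compute the Scott rank.
\end{enumerate}
For the third step, the orbits of nodes of rank $\beta<\ock$ are definable by $\Sicom{2\beta+O(1)}$ formulas, via the rank formulas and \cref{lem:alphato2alpha}. The orbits of nodes lying on paths, i.e.\ those of unbounded rank, have to be shown definable by a formula of some computable rank as well. This should follow from homogeneity: all such nodes at the same level with the same finite configuration are automorphic, and they are identified as "rank not $<\beta$" for a suitably large computable $\beta$. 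Alternatively, use a Barwise–Kreisel compactness argument to find a computable bound. This bound would give Scott rank exactly $\ock$ rather than $\ock+1$.

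The main obstacle is precisely this Scott rank computation. Harrison-type structures typically have Scott rank $\ock+1$ when there are non-automorphic nodes of infinite rank. So the construction must be homogenized enough, via infinite duplication of every extendible configuration, that all non-well-founded nodes with the same finite data are automorphic. This would keep the rank at $\ock$ while still forcing isomorphisms to respect a marked path. I would first try the duplication trick together with a "$\forall\beta<\ock$ rank $\geq\beta$" uniform characterization. I would then check against \cref{lem:unstableseq}-style reasoning that no orbit requires a $\Sinf{\ock}$-only definition.
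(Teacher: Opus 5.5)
There is a genuine gap. It sits at the step you flag as the main obstacle, and the fix you propose cannot work.

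\textbf{The Scott rank of your structure.} In your second case, consider the children of the root in the tree of finite $L$-descending sequences, however you saturate it. They are nodes $\langle x\rangle$ of tree rank $\mathrm{ot}(L({<}x))$ when $x$ is in the well-founded part, and these ranks are cofinal in $\ock$. They are nodes of rank $\infty$ when $x$ is in the ill-founded part. For each $\beta<\ock$, a rank-$\infty$ child of the root is $\beta$-back-and-forth equivalent to children of sufficiently large well-founded rank. So its orbit is not $\Sinf{\beta}$-definable for any $\beta<\ock$. Passing to disjuncts as in the Remark, it is not $\Sinf{\ock}$-definable either. Hence the structure has Scott rank $\ock+1$, however much duplication you add.

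Homogeneity is not the problem: in a rank-homogeneous tree the orbits are already determined by the ranks along the branch. The problem is that ``rank $=\infty$'' cannot be written as ``rank $>\beta$'' for a computable $\beta$ when the well-founded ranks at that level are cofinal in $\ock$. No compactness argument will produce a bound that does not exist.

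\textbf{The missing idea: Knight--Millar thinning.} The paper first replaces $L$ by $1+\omega\cdot L+2$, so that successors and limit points are computable, there is a least element, and there are top elements $a<b$. It then builds c.e.\ sets $A_0=\{a,b\}\subseteq A_1\subseteq\cdots$. To obtain $A_{n+1}$, for each $x\in A_n\setminus A_{n-1}$ it adds:
\begin{enumerate}
\item the $L$-predecessor of $x$, if $x$ is a successor;
\item a computable cofinal $\omega$-sequence lying above $x$'s $A_n$-predecessor, if $x$ is a limit.
\end{enumerate}
The tree $T$ consists of descending sequences whose $n$th entry lies in $A_n$, with every node repeated infinitely often.

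The effect is that the well-founded part of each $A_n$ lies below a single well-founded element. So at each level the well-founded ranks are bounded by a computable ordinal, while $T$ stays rank-homogeneous. By Lemmas 3.1 and 3.2 of Calvert--Knight--Millar, $T_a\cong T_b$ and both have Scott rank at most $\ock$, which suffices by the disjoint-union remark.

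\textbf{Step 2 also lacks a mechanism.} Gluing in a copy of $T_L$ gives no computable path, since $T_L$ has no hyperarithmetic path. Marking nodes destroys $\A\cong\B$.

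The paper needs no computable path; it uses self-similarity. Since $b>a$, $b\in A_0$, and $A_n\subseteq A_{n+1}$, we have $\langle(b,0),(c_0,0),\dots,(c_n,0)\rangle\in T_b$ whenever $\langle(c_0,k_0),\dots,(c_n,k_n)\rangle\in T_a$. Applying an isomorphism $f\colon T_b\to T_a$ repeatedly, one level deeper each time, produces $a=c_0>c_1>\cdots$ computably in $f$. This is a descending sequence through $1+\omega\cdot L+2$, hence one through $L$.

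Your case split is harmless but unnecessary, since the paper's construction is uniform.
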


A degree with this property is called \define{high for descending sequences} in \cite{cft23}, where it was observed that such a degree $\*d$ must have $\*d^{(3)} \ge_T \+O$.

\begin{proof}
Fix an ill-founded computable linear order $\+L$.  By replacing $\+L$ with $1+\omega\cdot \+L+2$, we may assume that the set of limit points and the successor relation on $\+L$ are both computable, that $\+L$ has a distinguished least element, and that $\+L$ has last elements $a < b$.  Note that a descending sequence through $1+\omega\cdot \+L+2$ computes a descending sequence through $\+L$.

We mimic the construction of a thin tree from \cite{km2010}.  First we define an increasing sequence of c.e.\ sets, $(A_n)_{n \in \{-1\} \cup \omega}$.  Considering each $A_n \subseteq \+L$, with the induced order, we will maintain the following property: if $x \in A_n \setminus A_{n-1}$, then either $x$ is the least element of $A_n$ or $x$ has an immediate predecessor in $A_n$, and this predecessor relation is uniformly computable.  Note that $x$'s predecessor in $A_n$ need not be the same as $x$'s predecessor in $\+L$, and indeed $x$ might be a limit point in $\+L$.  We also maintain that $A_n \setminus A_{n-1}$ contains an element of the ill-founded part of $\+L$.
\begin{itemize}
\item $A_0 = \{a, b\}$ and $A_{-1} = \emptyset$.  Here $a$ is the least element of $A_0$ and is $b$'s predecessor within $A_0$.
\item We enumerate all of $A_n$ into $A_{n+1}$.  Also, for $x \in A_n \setminus A_{n-1}$, we may enumerate additional elements depending on $x$'s role in $\+L$:
\begin{itemize}
\item If $x$ is the least element of $\+L$, then we do nothing for $x$.
\item If $x$ is the successor of $y$ in $\+L$, then we enumerate $y$ into $A_{n+1}$.  If $x$ was the least element of $A_n$, then $y$ is the least element of $A_{n+1}$.  If instead $x$ had a predecessor $z$ in $A_n$ other than $y$, then $z$ is $y$'s predecessor in $A_{n+1}$.  (If $y \in A_n$, then we will not have $y \in A_{n+1}\setminus A_n$, and so we do not need to define its predecessor in $A_{n+1}$.)
\item If $x$ is a limit point of $\+L$, then we fix a computable increasing sequence $y_0 < y_1 < \dots$ with limit $x$.  If $x$ had a predecessor $z$ in $A_n$, then we require that $z < y_0$.  For each $i$, $y_i$ is $y_{i+1}$'s predecessor in $A_{n+1}$.  If $z$ exists, it is $y_0$'s predecessor in $A_{n+1}$; otherwise, $y_0$ is the least element of $A_{n+1}$.
\end{itemize}
\end{itemize}
This completes the construction of the sequence $(A_n)_{n \in \{-1\}\cup\omega}$.  A straightforward induction verifies that our predecessors are correctly defined, that $A_n\setminus A_{n-1}$ contains an element of the ill-founded part of $\+L$, and that the order-type of $A_n \subset \+L$ is an ordinal at most $\omega^n\cdot 2$.

Now we form a tree $T$ consisting of decreasing sequences through $\+L$ which meet each $A_n$ in order, and such that all nodes occur with infinite multiplicity.  More formally, $T$ consists of sequences $\sigma \in (\+L \times \omega)^{<\omega}$ satisfying:
\begin{itemize}
\item If $\sigma(n) = (x, k)$, then $x \in A_n$; and
\item If $\sigma(n) = (x, k)$ and $\sigma(n+1) = (y, m)$, then $y <_{\+L} x$.
\end{itemize}
Note that the purpose of the second coordinate is to give all children infinite multiplicity.

A simple induction on tree rank shows that for $\sigma \in T$ non-empty with last element $x$, the tree rank of $\sigma$ is the order-type of $\+L(< x)$ when $x$ is in the well-founded part of $\+L$, and is $\infty$ when $x$ is in the ill-founded part of $\+L$.  Thus $T$ is {\em rank homogeneous}: if $\sigma \in T$ with tree rank $\alpha$, and there is some $\tau \in T$ with tree rank $\beta < \alpha$ and $|\tau| = |\sigma|+1$, then $\sigma$ has infinitely many children of tree rank $\beta$.

We make a computable structure out of $T$ using a unary function which maps the root to itself, and which maps each non-root to its parent.  We define $T_a$ to be the subtree consisting of elements comparable with $\langle (a, 0)\rangle$, and similarly $T_b$.  Then $T_a$ and $T_b$ are isomorphic and have Scott rank at most $\ock$ (Lemmas 3.1 and 3.2 of \cite{ckm06}, respectively).

Now, suppose $f: T_b \to T_a$ is an isomorphism.  Then note that $f$ must map the root to the root, and must send $\langle (b,0)\rangle$ to $\langle (a,0) \rangle$.  Also, $\langle (b, 0), (a, 0)\rangle \in T_b$.  Consider now $f( \langle (b, 0), (a,0) \rangle) = \langle (a, 0), (c,k) \rangle$.  It must be that $c <_{\+L} a$ and $c \in A_1 \subset A_2$.  So $\langle (b, 0), (a, 0), (c, 0)\rangle \in T_b$.  We continue in this fashion, defining a decreasing sequence $c_0 >_{\+L} c_1 >_{\+L} \dots$ satisfying:
\begin{itemize}
\item $c_0 = a$;
\item $c_{n+1}$ is such that $f(\langle (b, 0), (c_0,0), \dots, (c_n, 0)\rangle) = \langle (c_0,k_0), \dots, (c_n, k_n), (c_{n+1}, k_{n+1})\rangle$ for some $k_0, \dots, k_{n+1}$.
\end{itemize}
In this way, $f$ computes the sequence $c_0, c_1, \dots$, which completes the proof.
\end{proof}

We end with some open questions.

\begin{question}
Is there a degree which is high for Scott rank $\ock$ but not high for Scott rank $\ock+1$, and thus not high for isomorphism?
\end{question}

In \cite{cft23}, it was shown that highness for isomorphism is the same as computing a path on every ill-founded computable tree $T \subseteq \omega^{<\omega}$.  This suggests the following question, which seems related to the previous question but is not obviously equivalent.

\begin{question}
Is highness for isomorphism the same as computing a path on every ill-founded computable tree $T \subseteq \omega^{<\omega}$ of Scott rank at most $\ock$?
\end{question}

\printbibliography
\end{document}